\documentclass{article}
\usepackage[utf8]{inputenc}
\usepackage{amsthm}
\usepackage{amsmath}
\usepackage{comment}
\usepackage{amsfonts}
\usepackage{color}
\usepackage{latexsym}
\usepackage{geometry}
\usepackage{enumerate}
\usepackage[shortlabels]{enumitem}
\usepackage{csquotes}
\usepackage{graphicx}
\usepackage{comment}
\usepackage{appendix}
\usepackage{hyperref}
\usepackage{esint}
\usepackage{bm}
\usepackage{amssymb}

\hypersetup{
    colorlinks=true,
    linkcolor=blue,
    filecolor=magenta,      
    urlcolor=cyan,
}

\emergencystretch=\maxdimen
\DeclareMathOperator{\Rm}{Rm}
\DeclareMathOperator{\Ric}{Ric}

\makeatletter
\newcommand*{\rom}[1]{\rm {\expandafter\@slowromancap\romannumeral #1@}}
\makeatother
\def\Xint#1{\mathchoice
{\XXint\displaystyle\textstyle{#1}}%
{\XXint\textstyle\scriptstyle{#1}}%
{\XXint\scriptstyle\scriptscriptstyle{#1}}%
{\XXint\scriptscriptstyle\scriptscriptstyle{#1}}%
\!\int}
\def\XXint#1#2#3{{\setbox0=\hbox{$#1{#2#3}{\int}$ }
\vcenter{\hbox{$#2#3$ }}\kern-.6\wd0}}

\def\dashint{\Xint-}

\protected\def\vts{%
  \ifmmode
    \mskip0.5\thinmuskip
  \else
    \ifhmode
      \kern0.08334em
    \fi
  \fi
}

\numberwithin{equation}{section}
\newtheorem{Theorem}{Theorem}[section]

\newtheorem{Lemma}[Theorem]{Lemma}

\theoremstyle{definition}
\newtheorem{Definition}[Theorem]{Definition}

\title{Rigidity and gap theorems for Ricci shrinkers}
\author{Pak-Yeung Chan\footnote{Pak-Yeung Chan's research is supported by the Yushan Young Fellow Program of the Ministry of Education (MOE) Taiwan (MOE-108-YSFMS-0004-012-P1), and by the NSTC grant 113-2115-M-007-014-MY2.}, Yongjia Zhang\footnote{Yongjia Zhang's research is  supported by National Natural Science Foundation of China NSFC12301076.}}
\date{}

\begin{document}
\maketitle
\begin{abstract}
We prove local versions of the Ricci curvature and $\nu$-entropy gap theorems for Ricci shrinkers, which respectively generalize a previous result of Munteanu-Wang \cite{MW11} and a prior result of the authors with Ma \cite{CMZ22}. The key point is that these local gaps depend only on the dimension and not on the global entropy or any other geometric information of the Ricci shrinker. As an application, we provide a local criterion for removable Type~I singularities of the Ricci flow.
\end{abstract}
\section{Introduction}

A Ricci shrinker is a tuple $(M^n,g,f)$ of a complete Riemannian manifold $(M,g)$ and a smooth potential function $f:M\to\mathbb{R}$, satisfying the equation
\begin{align}\label{eq:shrinker_equation}
    \Ric+\nabla^2 f=\frac{1}{2}g.
\end{align}
In this article, we always normalize the potential function such that the following equation holds:
\begin{align}\label{eq:normalization}
    |\nabla f|^2+R=f.
\end{align}
By \cite{CZ10}, the potential function $f$ always admits a minimum point $p$. Since this point plays a central role in the study of Ricci shrinkers, we will use the tuple $(M,g,f,p)$ to represent a shrinker. 

The most trivial example of shrinkers is the gaussian shrinker $(\mathbb{R}^n,g_e,|x|^2/4,0)$, where $g_e$ is the standard flat metric on the Euclidean space. The rigidity of Gaussian shrinkers has attracted considerable attention in the literature. For instance, Carrillo-Ni \cite{CN09} and Yokota \cite{Yo12} proved a gap theorem for the gaussian shrinker with respect to Perelman's $\mu$-functional \cite{Per02}. This result was later localized by the authors and Ma \cite{CMZ22}. On the other hand, Li-Wang \cite{LW20}  studied the rigidity of the gaussian shrinker with respect to the Gromov-Hausdorff distance. They proved that a normalized shrinker, if close enough to the gaussian shrinker in the Gromov-Hausdorff sense, must be isometric to the gaussian shrinker. For other rigidity results of shrinkers, especially of the cylindrical shrinkers, see  \cite{LW24,CM25,LZ23, WW25, Zh18, Zh20}.

In this article, we continue studying the rigidity of the gaussian shrinker. We shall find conditions as local and weak as possible that are sufficient to force a shrinker to be flat. Our work is inspired by \cite{LLW21}. Indeed, though not explicitly stated, the arguments of \cite{LLW21} imply the following strong rigidity result 
of the gaussian shrinker.

\begin{Theorem}[\cite{LLW21}]\label{Thm:GH gap}
There is a positive constant $\varepsilon=\varepsilon(n)>0$ with the following property. Let $(M^n,g,f,p)$ be a Ricci shrinker, where $p$ is a minimum point of $f$. Suppose 
\begin{align*}
    d_{\operatorname{GH}}\Big(B(p,1),B_e(1)\Big)<\varepsilon,
\end{align*}
where $d_{\operatorname{GH}}$ is the Gromov-Hausdorff distance, and $B_e(1)$ is the unit ball in the Euclidean space,
    then $(M^n,g,f,p)$ is the gaussian shrinker $(\mathbb{R}^n,g_e,|x|^2/4,0)$.
\end{Theorem}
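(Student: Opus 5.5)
The plan is a contradiction and compactness argument, finished off by the entropy gap of Carrillo--Ni and Yokota. Suppose the statement fails. Then there are Ricci shrinkers $(M_i^n,g_i,f_i,p_i)$, none of them gaussian, with $d_{\operatorname{GH}}(B(p_i,1),B_e(1))<\varepsilon_i\to 0$. Write $\mu_i$ for the $\mu$-entropy (equivalently $\log$ of Perelman's reduced volume / the shrinker entropy), normalized so that $\mu\le 0$ with equality exactly for the gaussian shrinker. By the gap theorem of \cite{CN09,Yo12} there is $\delta(n)>0$ such that $\mu_i\le-\delta(n)$ for every $i$. The goal is to show that $\mu_i\to 0$, which contradicts this.

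\textbf{Step 1: curvature control near $p_i$.} Consider the canonical Ricci flow $g_i(t)=|t|\,\phi_t^*g_i$ for $t<0$. Since $\nabla f_i(p_i)=0$, the point $p_i$ is fixed by $\phi_t$. Consequently $B_{g_i(t)}(p_i,\sqrt{|t|})$ is isometric, after scaling, to $B_{g_i}(p_i,1)$, so it is $\varepsilon_i\sqrt{|t|}$-close to a Euclidean ball at every time. I would invoke the pseudolocality theorem of \cite{LLW21} for Ricci shrinkers. This version needs no a priori curvature bound, only almost-Euclidean closeness of the initial ball; the GH closeness should be upgraded to almost-Euclidean isoperimetry or volume via the no-local-collapsing and volume-comparison estimates available for shrinkers. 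Applied from time $t=-1-\eta$ with $\eta$ small, it gives $|\Rm_{g_i}|\le C(n)$ on $B_{g_i}(p_i,c(n))$ for all $i$ large, together with a uniform injectivity radius bound there. I expect this step to be the main obstacle: pseudolocality produces no genuine gain on its own, because of scale invariance, so its only role is to provide uniform local regularity.

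\textbf{Step 2: limit and identification.} Next I use the weak compactness theory for Ricci shrinkers from \cite{LLW21}, which rests on the uniform bound $\mu_i\ge$ (something uniform) and on the volume growth estimates. Passing to a subsequence, $(M_i,g_i,f_i,p_i)$ converges in the pointed sense to a limit shrinker $(M_\infty,g_\infty,f_\infty,p_\infty)$. By Step 1 the convergence is smooth near $p_\infty$. The ball $B(p_\infty,1)$ is then isometric to a Euclidean ball, so $\Rm\equiv0$ on an open set. Since shrinkers are real-analytic in harmonic coordinates on their regular part, unique continuation forces $g_\infty$ to be flat. A flat shrinker with $\nabla^2f=\frac12 g$ has a strictly convex proper potential, so it is simply connected and isometric to $\mathbb{R}^n$, with $f_\infty=|x|^2/4$. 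Hence the limit is the gaussian shrinker and in particular has $\mu_\infty=0$.

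\textbf{Step 3: entropy continuity.} Finally, I use the continuity of the entropy under the pointed convergence of \cite{LLW21}. This relies on the uniform Gaussian decay of $e^{-f_i}$ coming from $f_i\approx d^2(p_i,\cdot)/4$, so that no entropy escapes to infinity. It gives $\mu_i\to\mu_\infty=0$, which contradicts $\mu_i\le-\delta(n)$. If the entropy continuity is not available in exactly this form, my fallback is to work with the local $\nu$-entropy on the smooth region and the localized gap of \cite{CMZ22}.
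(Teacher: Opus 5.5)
The overall architecture is sound: contradiction, compactness for noncollapsed shrinkers, identifying the limit as $\mathbb R^n$, then a rigidity statement. Closing with entropy continuity plus the Carrillo--Ni/Yokota gap, instead of \cite[Corollary 7]{LW20}, would be an acceptable variant, granted continuity of $\mu$ under that convergence. The genuine gap is where you suspected: nothing in your proposal produces the uniform noncollapsing that Steps 1--3 all rest on.

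Step 2 simply assumes ``$\mu_i\ge$ (something uniform)''. Step 1 was supposed to supply it via pseudolocality, but the available pseudolocality theorems need an almost-Euclidean volume, isoperimetric or local $\nu$-entropy hypothesis. Gromov--Hausdorff closeness of $B(p_i,1)$ to $B_e(1)$ implies none of these without a lower Ricci bound: a ball can be GH-close to $B_e(1)$ with arbitrarily small volume (e.g.\ thin tubes around a fine net of segments). A shrinker has no a priori lower Ricci bound on $B(p_i,1)$. The tools you name cannot fill this in. No-local-collapsing on shrinkers has constants depending on a lower bound for $\mu(M)$, which is what you are trying to prove, so invoking it is circular. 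The volume comparison available on shrinkers, $|B(p,r)|\le C(n)r^n$, only bounds volume from above.

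The missing idea is the conformal change of \cite[Theorem 3.6]{LLW21}, $\bar g_i=e^{-2(f_i-f_i(p_i))/(n-2)}g_i$. On a normalized shrinker $\Delta f-|\nabla f|^2=\tfrac n2-f$, so one computes $\Ric_{\bar g_i}=\tfrac12 g_i+\tfrac1{n-2}\big(df_i\otimes df_i+(\tfrac n2-f_i)g_i\big)$. This is bounded by a dimensional constant near $p_i$, because $f_i(p_i)\le n/2$ and $|\nabla f_i|^2\le f_i$. Also, small $\bar g_i$- and $g_i$-balls of radius $\rho$ are $D\rho^2$-GH-close. So at a fixed scale $\rho(n)$ the $\bar g_i$-ball is GH-close to a Euclidean ball and has bounded Ricci curvature. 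Colding's volume continuity \cite{Cold97} then gives $|B_{\bar g_i}(p_i,\rho)|\ge\tfrac12\omega_n\rho^n$. Hence $|B_{g_i}(p_i,1)|\ge c(n)$, and $\mu(M_i)\ge-Y(n)$ by \cite[Lemmas 2.2, 2.3]{LLW21}. With this in hand, pseudolocality is unnecessary.

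A secondary point concerns Step 2. The limit is only a metric space $\mathcal R\sqcup\mathcal S$, so unique continuation and the simple-connectivity argument for smooth complete shrinkers do not apply directly; flat cones $\mathbb R^n/\Gamma$ can occur as limits. Instead, use that $R$ vanishes on $B(p_\infty,1)\subset\mathcal R$, so $X$ is a Ricci-flat cone with vertex $p_\infty$ by item (4) of Theorem~\ref{Thm:shrinker limit}. A cone whose vertex is a smooth point is $\mathbb R^n$.
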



It is interesting to ask, whether we can find other local conditions that are sufficient to force a shrinker to be gaussian. Naber \cite[Theorem 8.1]{Na10} proved that a complete Ricci shrinker must be gaussian if $\frac{1}{2} > \Ric \ge 0$ everywhere. Munteanu-Wang \cite[Corollary 1.1]{MW11} showed that a shrinker is flat if $|{\Ric}| \le \frac{1}{100n}$ globally on $M$. The authors and Ma \cite{CMZ22} proved that a shrinker with sufficiently large curvature radius (depending only on the dimension) at a minimum point of $f$ must be flat. In light of these results, as well as those of Li-Li-Wang \cite{LLW21}, it is natural to ask whether the same conclusion holds under a small local Ricci curvature bound near a minimum point of $f$. We show that this is indeed the case.

\begin{Theorem} \label{Thm:Ric gap}
There is a positive constant $\varepsilon=\varepsilon(n)>0$ with the following property. Let $(M^n,g,f,p)$ be a Ricci shrinker, where $p$ is a minimum point of $f$. Suppose 
\begin{align*}
    \Ric\le \varepsilon g\quad \text{ on }\quad B(p,1),
\end{align*}
    then $(M^n,g,f,p)$ is the gaussian shrinker $(\mathbb{R}^n,g_e,|x|^2/4,0)$.
\end{Theorem}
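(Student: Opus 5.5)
The plan is to reduce Theorem~\ref{Thm:Ric gap} to Theorem~\ref{Thm:GH gap}: show that a small upper Ricci bound on $B(p,1)$ forces $B(p,1)$ to be Gromov--Hausdorff close to $B_e(1)$. We argue by contradiction and compactness. Suppose $(M_i,g_i,f_i,p_i)$ are non-flat shrinkers with $\Ric_i\le \varepsilon_i g_i$ on $B(p_i,1)$ and $\varepsilon_i\to 0$. By Theorem~\ref{Thm:GH gap}, each satisfies $d_{\operatorname{GH}}(B(p_i,1),B_e(1))\ge\varepsilon(n)$. The goal is to extract a limit of the unit balls and show that it is Euclidean, which contradicts this lower bound.

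\textbf{Step 1: two-sided Ricci control on the unit ball.} At $p$ we have $\nabla f(p)=0$, so the normalization \eqref{eq:normalization} gives $f(p)=R(p)$. The standard estimate $\tfrac14(d(x,p)-5n)_+^2\le f(x)\le \tfrac14(d(x,p)+\sqrt{2n})^2$ of Cao--Zhou \cite{CZ10} then yields $f\le C(n)$ and $|\nabla f|\le C(n)$ on $B(p,2)$. The key identity is
\begin{align*}
\Delta_f R = R-2|\Ric|^2 ,
\end{align*}
together with $R\ge 0$ (Chen). Tracing the upper bound gives $R\le n\varepsilon$ on $B(p,1)$, and the shrinker equation gives $\Ric=\tfrac12 g-\nabla^2 f$. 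I would apply a cutoff/maximum-principle or mean-value argument to $\Delta_f R\le R-2|\Ric|^2$, or alternatively use $\Ric\ge -(n-1)\varepsilon$-type bounds coming from $\Ric\le\varepsilon$ and $R\ge0$. Indeed, the second route works directly: the smallest eigenvalue satisfies $\lambda_{\min}\ge R-(n-1)\varepsilon\ge -(n-1)\varepsilon$. Hence $|\Ric|\le C(n)\varepsilon$ on $B(p,1)$, and this step is easy.

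\textbf{Step 2: volume noncollapsing at $p$.} This is the main obstacle, because no global entropy bound is assumed. Even so, the constant $\varepsilon$ must depend only on $n$. The plan is to use that $\mu$-entropy is controlled locally near $p$. Alternatively, I would use the known lower bound $|B(p,1)|\ge c(n)e^{\mu}$ in reverse: in the $\varepsilon$-Ricci-flat ball, the equation $\nabla^2 f=\tfrac12 g-\Ric$ shows that $f$ is almost $|x|^2/4$ in nature. Then $\nabla f$ generates a nearly conformal expanding field, and comparing $\operatorname{div}(\nabla f)=\Delta f=\tfrac n2-R\approx \tfrac n2$ with the flux of $|\nabla f|\approx d/2$ through geodesic spheres gives a volume ratio $|B(p,r)|/r^n$ that is almost monotone. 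Combined with Bishop--Gromov, this pins the ratio near $\omega_n$. Concretely, integrating $\Delta f$ over sublevel sets $\{f\le t\}$, as in Cao--Zhou's volume growth argument, gives $\frac{d}{dt}\big(t^{-n/2}V(t)\big)\approx 0$ for small $t$. Here $V(t)$ is the volume of the sublevel set $\{f\le t\}$. Passing $t\to f(p)\approx 0$, and using that $p$ is a nondegenerate minimum with Hessian $\approx\tfrac12 g$, the sublevel sets near $p$ are almost Euclidean balls of radius $2\sqrt t$. This gives $V(t)\approx\omega_n(4t)^{n/2}$ up to $t\approx 1/4$, and hence noncollapsing plus almost-maximal volume ratio on $B(p,1)$.

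\textbf{Step 3: conclusion.} With $|\Ric|\le C\varepsilon_i$ and the volume ratio near $\omega_n$, Colding's volume convergence theorem (or Cheeger--Colding almost rigidity) gives $d_{\operatorname{GH}}(B(p_i,1),B_e(1))\to 0$. This contradicts the lower bound $d_{\operatorname{GH}}(B(p_i,1),B_e(1))\ge\varepsilon(n)$ obtained above. Applying Theorem~\ref{Thm:GH gap} then proves the statement. I expect the delicate point to be making the Step~2 sublevel-set comparison uniform near the minimum. There, $f-f(p)$ behaves quadratically only after the Hessian control is integrated along geodesics. This integration uses $|\nabla^2 f-\tfrac12 g|\le C\varepsilon$ on $B(p,1)$, which follows from Step~1.
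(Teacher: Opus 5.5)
\textbf{Verdict: there is a genuine gap, and it is in Step~2, which carries the whole difficulty.}

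Your Step~1 is correct; it is the paper's Remark~(1). Step~3 is a legitimate reduction: if you knew $|B(p_i,1)|\to\omega_n$, then $\Ric\ge -(n-1)\varepsilon_i$ and Cheeger--Colding/Colding would give $d_{\operatorname{GH}}(B(p_i,1),B_e(1))\to0$, and Theorem~\ref{Thm:GH gap} would finish.

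The argument you sketch for Step~2 does not prove it. As written, the quantity is not almost constant near the minimum. For a non-flat shrinker $f(p)=R(p)>0$, so $t^{-n/2}|\{f<t\}|$ rises from $0$ at $t=f(p)$. There the boundary term $\int_{\{f=t\}}R/|\nabla f|\approx R(p)V'(t)$ is comparable to $tV'(t)$.

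Suppose you recenter with $\tilde f=f-f(p)$ and $D(s)=\{\tilde f<s\}$, and use $\nabla R=2\Ric(\nabla f)$ to get $|R-R(p)|\le C\varepsilon s$ on $\partial D(s)$. The best the Cao--Zhou computation then gives is
\begin{align*}
\Big|s\frac{d}{ds}\log\big(s^{-n/2}|D(s)|\big)\Big|=\frac{1}{|D(s)|}\Big|\int_{\partial D(s)}\frac{R-R(p)}{|\nabla\tilde f|}-\int_{D(s)}R\Big|\le C(n)\varepsilon .
\end{align*}
This is an error of size $\varepsilon$ on \emph{every} dyadic scale, and it cannot be avoided with your input. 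The pinching $|\nabla^2 f-\tfrac12 g|\le C\varepsilon$ is invariant under $g\mapsto r^{-2}g$, $f\mapsto r^{-2}f$, so it never improves at small scales.

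Consider integrating from a scale $s_0$, where the smoothness of $p$ makes the ratio nearly Euclidean, up to scale $1$. This costs $C\varepsilon\log(1/s_0)$, and $s_0$ is not controlled by $n$ and $\varepsilon$. Contrast Bishop--Gromov: its error at scale $r$ is $O(\varepsilon r^2)$, which is summable. That is why you get the upper bound $|B(p,r)|\le(1+C\varepsilon)\omega_nr^n$ for free, but not the lower one.

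Two further points in Step~2 do not help:
\begin{itemize}
\item Knowing that sublevel sets are sandwiched between metric balls of radii $2\sqrt t(1\pm C\varepsilon)$ says nothing about their volume, which is exactly what is at stake. That part is circular.
\item The bound $|B(p,1)|\ge c(n)e^{\mu}$ needs a global entropy bound, which you do not have.
\end{itemize}

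\textbf{What is missing.} You need a regularity statement that \emph{does} improve under blow-up, combined with cone rigidity. This is the paper's Lemma~\ref{Lm:critical radius estimate}.
\begin{itemize}
\item Let $r$ be the largest scale in $(0,1]$ with $|B(p,r)|\ge(1-\delta)\omega_nr^n$. If $r_i\to0$ along a sequence, rescale by $r_i^{-1}$.
\item Then $|\Ric_{\bar g_i}|\le\varepsilon_ir_i^2\to0$. Theorem~\ref{Thm:critical radius} gives $r_H(p_i,\bar g_i)\ge\delta$, so any limit has a $C^{1,\alpha}$ point at $p$.
\item Meanwhile $h_i=4r_i^{-2}(f_i-f_i(p_i))$ still has Hessian $2(1\pm2\varepsilon_i)\bar g_i$. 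Replacing it by the solution of $\Delta\bar h_i=2n$ with boundary values $h_i$ yields a $(\Psi(\varepsilon_i\,|\,A),A)$-conical function.
\item By Theorem~\ref{Thm:conical function implying almost cone} the limit is a metric cone. A cone with a smooth tip is $\mathbb{R}^n$, which contradicts $|B_{\bar g_i}(p_i,1)|=(1-\delta)\omega_n$ by Colding's volume convergence.
\end{itemize}

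This yields only noncollapsing, $r\ge r_0(n)$, not almost-maximal volume at scale $1$. The paper then:
\begin{itemize}
\item gets $\mu(M_i)\ge -C(n)$ from Lemma~\ref{Lm:volume and entropy noncollapsed};
\item takes a shrinker limit via Theorem~\ref{Thm:shrinker limit};
\item uses item~(4) to see that the limit is a Ricci-flat cone at $p$ with smooth tip, hence $\mathbb{R}^n$;
\item concludes with \cite[Corollary 7]{LW20}.
\end{itemize}

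Your reduction to Theorem~\ref{Thm:GH gap} needs some compactness and $\varepsilon$-regularity argument of this kind in place of Step~2. The sublevel-set ODE alone cannot supply it.
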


\bigskip

\noindent\emph{Remarks.}
\begin{enumerate}[(1)]
    \item Since on a shrinker, we always have $R>0$ by a result of Chen \cite{Che09}, it makes no difference to assume $|{\Ric}|\le \varepsilon$ or $\Ric\le \varepsilon g$; we will freely implement the former assumption in our argument.
    \item According to our argument, the theorem can be improved straightforwardly to the following: For any $r>0$, there is an $\varepsilon=\varepsilon(r,n)>0$, such that if
    \begin{align*}
        \Ric\le \varepsilon g\quad \text{ on }\quad B(p,r),
    \end{align*}
    then the shrinker is gaussian. 
    \item Li-Li-Wang \cite[Theorem 1.3]{LLW21} obtained a local gap theorem for the scalar curvature of closed shrinkers, where the gap also depends on the entropy. Wang-Wang \cite{WW25} established a rigidity theorem for shrinkers in terms of the volume ratio on a large ball centered at $p$.
\end{enumerate}

In \cite{CMZ22}, it is also proved that a shrinker with sufficiently small local $\mu$-functional in a sufficiently large ball at the minimum point of the potential function must be flat. We will also provide an improvement of this theorem.

\begin{Theorem} \label{Thm:nu gap}
There is a positive constant $\varepsilon=\varepsilon(n)>0$ with the following property. Let $(M^n,g,f,p)$ be a Ricci shrinker, where $p$ is the minimum point. Suppose 
\begin{align*}
    \nu\Big(B(p,1),g,1\Big)\ge-\varepsilon,
\end{align*}
    then $(M^n,g,f,p)$ is the gaussian shrinker $(\mathbb{R}^n,g_e,|x|^2/4,0)$.
\end{Theorem}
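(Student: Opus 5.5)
The plan is to reduce Theorem~\ref{Thm:nu gap} to the Gromov--Hausdorff gap, Theorem~\ref{Thm:GH gap}, by contradiction and compactness. Suppose the statement fails. Then there are non-gaussian shrinkers $(M_i^n,g_i,f_i,p_i)$ with $\nu(B(p_i,1),g_i,1)\ge -\varepsilon_i$ and $\varepsilon_i\to 0$. By Theorem~\ref{Thm:GH gap}, each of them satisfies $d_{\operatorname{GH}}(B(p_i,1),B_e(1))\ge \varepsilon(n)$. The goal is to show that, after passing to a subsequence, $B(p_i,1)$ converges in the Gromov--Hausdorff sense to $B_e(1)$. That contradicts the lower bound and proves the theorem.

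\textbf{Step 1: local noncollapsing and almost-Euclidean volume.} The local $\nu$-entropy bound on $B(p,1)$ gives a local log-Sobolev inequality with constant close to the Euclidean one, at all scales $\tau\le 1$. By the standard argument of Perelman, with test functions cut off to small balls, this yields volume noncollapsing $|B(x,r)|\ge \kappa(n) r^n$ for $B(x,r)\subset B(p,1)$. The constant depends only on $n$, provided we have some local control of the scalar curvature. That control comes from the normalization $|\nabla f|^2+R=f$ together with the estimate $f(x)\le \frac14(d(x,p)+\sqrt{2n})^2$ of Cao--Zhou. These give $0<R\le C(n)$ on $B(p,2)$. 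With $R$ bounded, a sharper argument gives more: as $\varepsilon\to 0$, the volume ratios $|B(x,r)|/(\omega_n r^n)$ become almost $1$ for balls deep inside $B(p,1)$. I would obtain this by testing the entropy against (almost) gaussian functions centered at $x$, comparing with the equality case of the Euclidean log-Sobolev inequality.

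\textbf{Step 2: extracting a limit.} On a shrinker we do not have a global Ricci lower bound, but the shrinker equation gives $\Ric=\frac12 g-\nabla^2 f$ with $|\nabla f|\le C(n)$ on $B(p,2)$. This Bakry--\'Emery structure, together with the noncollapsing from Step 1, lets me invoke the structure theory for noncollapsed Ricci shrinkers (Li--Li--Wang \cite{LLW21}, building on Haslhofer--M\"uller). Their results give pointed $\hat C^\infty$/Gromov--Hausdorff subconvergence to a metric space whose regular part carries a smooth limit shrinker, and crucially they only need local entropy/noncollapsing, which is what Step 1 provides. In the limit the local $\nu$-entropy is $\ge 0$, so every tangent cone at points of the limit ball has entropy $0$. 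By the Euclidean equality case of the log-Sobolev inequality, every tangent cone is then $\mathbb{R}^n$, so the limit ball is regular. On the regular part, entropy $0$ with equality in the log-Sobolev inequality forces the limit shrinker to be flat near the base point. Hence the limit of $B(p_i,1)$ is $B_e(1)$, which gives the contradiction.

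\textbf{Main obstacle.} The hard step is the upper semicontinuity of the local $\nu$-entropy under this weak convergence. Concretely, I must show that the limiting ball still satisfies $\nu\ge 0$ and that this rules out singular points, using only local information with no global entropy bound. I would first try to prove an almost-rigidity statement directly at the level of volume ratios, as in Step 1, and then apply Cheeger--Colding-type volume cone rigidity adapted to the Bakry--\'Emery Ricci bound $\Ric_f=\frac12 g$. This would conclude almost-Euclidean GH closeness without a full regularity theory.
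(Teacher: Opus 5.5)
Your frame is sound. You argue by contradiction, get a non-sharp bound $|B_{g_i}(p_i,1)|\ge v(n)$ from Perelman's no-local-collapsing argument (with $R\le f\le C(n)$ on $B(p_i,2)$), deduce $\mu(M_i)\ge -Y$ by Lemma~\ref{Lm:volume and entropy noncollapsed}, and take a limit via Theorem~\ref{Thm:shrinker limit}. Ending with Theorem~\ref{Thm:GH gap} is equivalent to the paper's use of \cite[Corollary 7]{LW20}.

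The gap is Step~2, which you yourself leave open. To conclude from ``every tangent cone has entropy $0$'' that the whole limit ball is regular, you would need three things:
\begin{enumerate}[(1)]
\item that the local $\nu$-entropy does not drop in the limit at possibly singular points;
\item a meaningful notion of entropy on singular tangent cones;
\item a rigidity statement for that entropy.
\end{enumerate}
None of this is supplied.

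The volume fallback does not repair it. The claim that $\nu\ge-\varepsilon$ forces almost-Euclidean volume ratios on balls deep inside $B(p,1)$ is unproved. Gaussian test functions give $\mu\lesssim\log(\text{volume ratio})$ only when the volume ratio is almost monotone. At scale $1$ you only know $\Ric=\frac12 g-\nabla^2 f$ with $|\nabla f|\le C(n)$, so at best this yields Euclidean closeness at a small scale, not the scale-$1$ closeness you aim for. Also, ``regular, and flat near the base point'' does not by itself imply $B(p_i,1)\to B_e(1)$.

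The missing idea is that you never need regularity of the whole ball. The paper's argument runs in three steps.

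\begin{enumerate}[(1)]
\item The pseudolocality theorem for shrinkers \cite[Theorem 25]{LW20} converts $\nu(B(p_i,1),g_i,1)\ge-\varepsilon$ into $|\Rm_{g_i}|\le C(n)$ on $B(p_i,\varepsilon(n))$. So $p$ is a regular point and $B(p,r)\subset\mathcal{R}$ for some small $r$. On this smooth ball the convergence is smooth, so you can transplant test functions. Using also that $\nu$ increases when the domain shrinks, you get $\nu(B(p,r),g,1)\ge\limsup_i\nu(B(p_i,r),g_i,1)\ge\lim_i\nu(B(p_i,1),g_i,1)=0$. Semicontinuity is trivial here.
\item Cheng's local rigidity theorem \cite[Theorem 1.4]{Cheng23} is the precise substitute for your ``equality in the log-Sobolev inequality''. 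It makes $g$ flat on $B(p,r)$.
\item Since $R$ now vanishes at a point of $\mathcal{R}$, Theorem~\ref{Thm:shrinker limit}(4) says $X$ is a Ricci-flat cone at $p$. A cone with a smooth tip is $\mathbb{R}^n$, so the sequence converges to Euclidean space and the contradiction follows.
\end{enumerate}

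The propagation from a small flat neighbourhood of $p$ to all of $X$ is done by this global rigidity of shrinker limits, not by entropy analysis at other points.
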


Since, by \cite{EMT11}, smooth Ricci shrinkers are singularity models (fixed-point blow-up limits) of Type~I Ricci flows, and since a flat blow-up limit implies that the base point is not a singular point \cite[Theorem 1.1]{EMT11}, we may apply Theorem \ref{Thm:Ric gap} to obtain the following result concerning removable singularities for Type~I Ricci flows.

\begin{Theorem}[A local condition for removable Type I singularities]\label{Thm:Applications}
    There is a dimensional constant $\delta=\delta(n)>0$ with the following property. Let $(M^n,g_t)_{t\in[-T,0)}$ be a Type I  Ricci flow (that is, $|{\Rm_{g_t}}|\le C/|t|$ for some $C$) on a close manifold $M^n$. Assume $x\in M$ is a point such that  $r_c(x,t)\ge 6n\sqrt{|t|}$ for all $t\in  [-T,0)$, where 
        \begin{align*}
            r_c(x,t):=\sup\{r>0\,:\, \Ric_{g_t}\le \delta r^{-2}g_t\ \text{ on }\ B_{g_t}(x,r)\}.
        \end{align*}
    Then $|{\Rm_{g_t}}|(x)$ remains bounded as $t\to T$. In other words, $x$ is not a singular point according to the definition of \cite{EMT11}.
\end{Theorem}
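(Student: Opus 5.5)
We argue by contradiction via the blow-up theory of Enders--Müller--Topping \cite{EMT11}, together with Theorem \ref{Thm:Ric gap}. Let $\delta=\varepsilon(n)$ be the constant from Theorem \ref{Thm:Ric gap}, possibly shrunk by a dimensional factor. Suppose $x$ is a singular point in the sense of \cite{EMT11}. Then, by \cite[Theorem 1.1]{EMT11}, for any sequence $t_i\nearrow 0$ the rescaled flows
\[
g_i(s):=|t_i|^{-1}g_{t_i+s|t_i|},\qquad s\in[-T/|t_i|,0),
\]
based at $x$ subconverge in the pointed Cheeger--Gromov sense to a nonflat gradient shrinking soliton flow $(M_\infty,g_\infty(s),x_\infty)$. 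This limit is given at $s=-1$ by a normalized shrinker $(M_\infty,g_\infty,f_\infty)$. Here $f_\infty$ arises as a limit of Perelman's reduced distance based at $(x,0)$, and the Type~I bound is used.

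The hypothesis is scale invariant. At time $t=t_i+s|t_i|$ we have $r_c(x,t)\ge 6n\sqrt{|t|}$. After rescaling by $|t_i|^{-1}$, this gives $\Ric_{g_i(s)}\le \delta (6n)^{-2}|s|^{-1} g_i(s)$ on $B_{g_i(s)}(x,6n\sqrt{|s|})$. Passing to the limit with smooth local convergence, at $s=-1$ we obtain
\[
\Ric_{g_\infty}\le \delta g_\infty \quad\text{on}\quad B_{g_\infty}(x_\infty,6n).
\]

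The main obstacle is that Theorem \ref{Thm:Ric gap} requires the curvature bound on $B(p,1)$, where $p$ is a minimum point of $f_\infty$, while the base point $x_\infty$ need not be a minimum. This is where the radius $6n$ enters. One needs a dimensional bound $d(p,x_\infty)\le 6n-1$, so that $B(p,1)\subset B(x_\infty,6n)$.

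To get this bound, first recall that the limit potential from the reduced distance satisfies $f_\infty(x_\infty)\le n/2$. This follows since the reduced volume is at most $1$, or from the standard estimate $\ell\le n/2$ at some point combined with $\ell(x,\tau)$ at the base point, as in \cite{EMT11}. The growth estimates of Cao--Zhou \cite{CZ10} then give
\[
\tfrac14\big(d(p,y)-5n\big)_+^2\le f(y)\le \tfrac14\big(d(p,y)+\sqrt{2n}\big)^2 .
\]
Applying the lower bound at $y=x_\infty$ yields $d(p,x_\infty)\le 5n+2\sqrt{n/2}\le 6n-1$ for $n\ge 2$; the constants may need a small tweak.

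Hence $\Ric_{g_\infty}\le \delta g_\infty$ on $B(p,1)$, and Theorem \ref{Thm:Ric gap} forces the limit to be the gaussian shrinker. This contradicts the nonflatness guaranteed by \cite[Theorem 1.1]{EMT11}. Therefore $|\Rm_{g_t}|(x)$ stays bounded.
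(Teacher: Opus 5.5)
Your outline matches the paper's: EMT blow-up at $x$, transfer of the scale-invariant Ricci bound to $B_{g_\infty}(x_\infty,6n)$, locating the minimum point $p$ by the potential growth estimate, then Theorem~\ref{Thm:Ric gap}. The gap is the step bounding $f_\infty(x_\infty)$. There is no general estimate $f_\infty(x_\infty)\le n/2$.

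\begin{itemize}
\item $\tilde V\le 1$ gives no pointwise upper bound on $\ell$.
\item $\min_M\ell(\cdot,\tau)\le n/2$ controls $\ell$ only at some unspecified point, not at the base point $x$.
\item What \cite{EMT11} obtain at the base point uses the constant path together with the Type~I bound. It gives only $\ell_{x,0}(x,\tau)\le C'(n,C)$. Using that would make $d(x_\infty,p)$, and hence the radius and $\delta$, depend on the Type~I constant, which the theorem must avoid.
\end{itemize}

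The claim is in fact false. Take the self-similar flow of a compact non-Einstein shrinker such as Koiso--Cao's. The blow-up at a generic $x$ is the shrinker itself, based at a maximum point $q$ of $f$, and there $f(q)=R(q)=\tfrac n2-\Delta f(q)>\tfrac n2$. Even granting $f\le n/2$, the needed inequality $5n+\sqrt{2n}\le 6n-1$ fails for $n=2,3$.

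The missing idea is to use the hypothesis at $x$ for \emph{all} times, not only in the limit at $s=-1$. Tracing $\Ric_{g_t}\le \delta(6n)^{-2}|t|^{-1}g_t$ at $x$ gives $R_{g_t}(x)\le \delta/|t|$ for every $t\in[-T,0)$. Test the $\mathcal L$-functional with the constant path at $x$. For $t<t'<0$ and $\bar\tau=t'-t$,
\[
\ell_{x,t'}(x,\bar\tau)\le\frac{1}{2\sqrt{\bar\tau}}\int_0^{\bar\tau}\sqrt{\tau}\,\frac{\delta}{\tau+|t'|}\,d\tau\le\frac{1}{2\sqrt{\bar\tau}}\int_0^{\bar\tau}\delta\,\tau^{-1/2}\,d\tau=\delta .
\]
Hence $\ell_{x,0}(x,\tau)\le\delta$, and so $f_\infty(x_\infty)=\lim_i\ell_{x,0}(x,\lambda_i)\le\delta$.

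The dimension-only growth estimate \cite[Lemma 2.1]{HM11} then gives $d(x_\infty,p)\le 5n+2\sqrt{\delta}\le 5n+1$. (This is the version with constants $5n$ and $\sqrt{2n}$, not the Cao--Zhou one.) So $B(p,1)\subset B(x_\infty,6n)$ for all $n\ge 2$, and the rest of your argument goes through. This step is exactly what keeps $\delta$ independent of the Type~I constant.
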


\emph{Remarks.} 
\begin{enumerate}[(1)]
    \item Note that the constant $\delta=\delta(n)$ is a dimensional constant, and it is in particular independent of the Type-I-coefficient $C$. On the other hand, a global version of the theorem can be deduced from \cite{MW11}, namely, if $r_c(x,t)\ge \sqrt{|t|}$ for {\sl all} $(x,t)\in M\times[-T,0)$, then the flow does not admit any singularity at $t=0$. 
    \item One may also apply Theorem \ref{Thm:nu gap} to obtain a condition of removable singularities formulated with the local $\nu$-functional. However, this result is not different from that which one can deduce from Perelman's pseudolocality theorem \cite[Theorem 10.1]{Per02}.
\end{enumerate}

\textbf{Acknowledgement.} The second author would like to thank Professor Lei Ni, Professor Liang Cheng, and Professor Meng Zhu for some enlightening discussions. The authors would like to thank Professor Bennett Chow for providing some helpful unpublished manuscripts.

\section{Preliminaries}

In this section, we summarize the main techniques for the proof of our main theorems. 

\subsection{Harmonic radius estimate}

First of all, we introduce an $\varepsilon$-regularity due to Anderson \cite{And90}. Let us recall the definition of harminic radius.

\begin{Definition}[Harmonic radius]
    Let $(M^n,g)$ be a complete Riemannian manifold and $p\in M$. The $(Q,\alpha)$-harmonic radius $r_H(p)$ at $p$, where $Q>1$ and $\alpha\in(0,1)$, is defined to be the maximum radius $r$ such that: 
    \begin{enumerate} [(1)]
        \item there is a harmonic coordinate chart $x=(x^1,x^2,\hdots,x^n): B(p,r)\to \mathbb{R}^n$. In particular, each coordinate $x^i$ is a harmonic
        function;
        \item \emph{in this chart}, it holds that:
        \begin{itemize}
            \item $\displaystyle Q^{-1}\delta_{ij}\le g_{ij}\le Q g_{ij}$;
            \item $\displaystyle \sup_{x}r|\partial g_{ij}(x)|+\sup_{x\neq y}r^{1+\alpha}\frac{|\partial g_{ij}(x)-\partial g_{ij}(y)|}{|x-y|^\alpha}\le Q-1$.
        \end{itemize}
    \end{enumerate}
\end{Definition}
Henceforth throughout the whole paper, we shall \emph{fix the parameters $Q$ and $\alpha$} (for instance, we fix $\alpha =1/2$, $Q=2$), and will refer to $r_H(p)$ as the harmonic radius at $p$. Should it be ambiguous, we would also use the notation $r_H(p,g)$ to emphasize the metric under consideration. The following result is essentially the localization of \cite[Theorem 3.2, Remark 3.3]{And90}. For a proof of the local statement, see \cite[Theorem 2.2]{H19}.

\begin{Theorem}[$\varepsilon$-regularity]\label{Thm:critical radius}
    There is a positive number $\delta=\delta(n)>0$ with the following property. Let $(M^n,g)$ be a complete manifold and $p\in M$ a point. Suppose 
    \begin{align*}
        |B(p,1)|\ge (1-\delta)\omega_n\quad \text{ and }\quad |{\Ric}|\le \delta\quad\text{ on }\quad B(p,1),
    \end{align*}
    where $\omega_n$ is the volume of the unit ball in the $n$-dimensional Euclidean space, then we have
    \begin{align*}
        r_H(p)\ge \delta.
    \end{align*}
\end{Theorem}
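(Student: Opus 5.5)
We will argue by contradiction, using a blow-up and compactness argument in the spirit of Anderson \cite{And90}. Suppose the statement fails. Then there are $\delta_i\to 0$ and pointed complete manifolds $(M_i^n,g_i,p_i)$ with
\[
|B(p_i,1)|\ge(1-\delta_i)\omega_n,\qquad |{\Ric}|\le\delta_i \ \text{ on } B(p_i,1),\qquad r_H(p_i)<\delta_i .
\]

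\textbf{Point-picking and rescaling.} Consider the scale-invariant quantity
\[
\Phi_i(x)=\frac{r_H(x)}{1-d(p_i,x)},\qquad x\in B(p_i,1).
\]
Since $r_H$ is $1$-Lipschitz, $\Phi_i$ is continuous, and $\Phi_i\to\infty$ at the boundary. Let $x_i$ be a minimum point. Then $\Phi_i(x_i)\le\Phi_i(p_i)<\delta_i\to0$. Set $r_i=r_H(x_i)$, so $r_i\le \delta_i(1-d(p_i,x_i))$. Rescale to $\tilde g_i=r_i^{-2}g_i$. After rescaling:
\begin{itemize}
\item $r_H(x_i,\tilde g_i)=1$;
\item by minimality of $\Phi_i$, $r_H\ge 1/2$ on $\tilde B(x_i,R_i)$ with $R_i\to\infty$;
\item $|{\Ric}_{\tilde g_i}|\le \delta_i r_i^2\to0$ on these balls.
\end{itemize}

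\textbf{Volume control.} Apply relative volume comparison with the lower bound $\Ric\ge-\delta_i$ at the original scale. Because $B(p_i,1)$ is almost Euclidean and $x_i$ lies at distance $1-d(p_i,x_i)\gg r_i$ from $\partial B(p_i,1)$, every ball $B(x_i,s)$ with $s\le (1-d(p_i,x_i))/2$ satisfies
\[
|B(x_i,s)|\ge(1-\psi(\delta_i))\,\omega_n s^n ,
\]
where $\psi(\delta)\to0$ as $\delta\to0$. In rescaled terms, $\tilde B(x_i,s)$ has volume ratio at least $1-\psi(\delta_i)$ for all $s\le R_i$.

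\textbf{Compactness.} The uniform lower bound on the harmonic radius gives uniform $C^{1,\alpha}$ control of $\tilde g_i$ in harmonic charts. By Arzel\`a--Ascoli, a subsequence converges in the pointed $C^{1,\alpha'}$ topology ($\alpha'<\alpha$) to a complete $C^{1,\alpha}$ limit $(M_\infty,g_\infty,x_\infty)$. In harmonic coordinates the Ricci tensor is
\[
\Ric_{ij}=-\tfrac12 g^{kl}\partial_k\partial_l g_{ij}+Q_{ij}(g,\partial g),
\]
so the equation $\Ric\to0$ passes to the limit weakly. Elliptic regularity then shows that $g_\infty$ is smooth and Ricci-flat. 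Volumes converge under $C^{1,\alpha'}$ convergence, so every ball about $x_\infty$ has volume exactly $\omega_n s^n$. The equality case of Bishop--Gromov then gives that $(M_\infty,g_\infty)$ is isometric to $\mathbb{R}^n$, whose harmonic radius is $+\infty$.

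\textbf{Main obstacle: contradiction via continuity of $r_H$.} The remaining step, which I expect to be the hardest, is the upper semicontinuity of the harmonic radius under $C^{1,\alpha'}$ convergence: we need $\liminf r_H(x_i,\tilde g_i)>1$, contradicting $r_H(x_i,\tilde g_i)=1$. I would try the following.
\begin{enumerate}
\item On the limit $\mathbb{R}^n$, the standard coordinates satisfy the defining inequalities of the harmonic radius on the ball of radius $2$ with strict room to spare.
\item Pull back to $M_i$ via the convergence diffeomorphisms $\Psi_i$. Solve the Dirichlet problems $\Delta_{\tilde g_i}u^k=0$ on $\Psi_i(B_e(3))$, with boundary data the pulled-back linear coordinates.
\item Since the metrics are $C^{1,\alpha'}$-close, Schauder estimates for the divergence-form Laplacian give $u^k\to x^k$ in $C^{2,\alpha}$ on compact subsets. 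To obtain the full $C^{1,\alpha}$ norm of $g$ in these charts, rather than only $C^{1,\alpha'}$, I would use the Ricci-flat limit equation together with $|{\Ric}|\to0$ to bootstrap estimates in the new harmonic coordinates.
\item Conclude that $(u^k)$ is a harmonic chart on a ball of radius greater than $1$ satisfying the defining conditions with the fixed $Q$ and $\alpha$, for $i$ large.
\end{enumerate}
This contradicts $r_H(x_i,\tilde g_i)=1$ and proves the statement.
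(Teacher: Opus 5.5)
The paper does not prove this statement itself. It cites \cite{And90} and, for the local form, \cite{H19}. Your overall scheme (point-picking, rescaling, $C^{1,\alpha}$ compactness, Ricci-flat limit, identification with $\mathbb{R}^n$, continuity of $r_H$) is the standard Anderson argument.

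\textbf{The gap.} Your \textbf{Volume control} step is false as stated, and the proof depends on it. The only way the hypothesis $|B(p_i,1)|\ge(1-\delta_i)\omega_n$ reaches a ball centred at $x_i$ is the inclusion $B(x_i,s)\supset B(p_i,s-d(p_i,x_i))$. That inclusion gives nothing unless $d(p_i,x_i)\ll s\le 1-d(p_i,x_i)$. Bishop--Gromov at $x_i$ only carries a volume ratio from larger radii down to smaller ones; it cannot produce one.

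Because you minimise $\Phi_i$ over all of $B(p_i,1)$, $x_i$ may lie very close to $\partial B(p_i,1)$. There, a volume defect costs $|B(p_i,1)|$ almost nothing. The claim fails even for Ricci-flat metrics:
\begin{itemize}
\item Take an Eguchi--Hanson space with bolt size small compared to $\eta$. Let $o$ lie on the bolt, with $d(p,o)=1-\eta$.
\item Compare with $\mathbb{R}^4/\mathbb{Z}_2$. The preimage of $B(p,1)$ is two unit balls overlapping in a lens of volume $O(\eta^{5/2})$, so $|B(p,1)|\ge(1-C\eta^{5/2})\omega_4$.
\item On the other hand, $|B(o,\eta/2)|\approx\tfrac12\omega_4(\eta/2)^4$.
\end{itemize}
So nothing in your argument stops the blow-up limit at $x_i$ from being a nonflat Ricci-flat ALE space with $r_H\ge 1/2$ everywhere. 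In that case no contradiction arises.

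\textbf{The repair.} Pick points in a shrinking ball. Set $\tau_i=\sqrt{\delta_i}$ and minimise $\Phi_i(x)=r_H(x)/(\tau_i-d(p_i,x))$ over $B(p_i,\tau_i)$.
\begin{itemize}
\item Then $\Phi_i(x_i)\le\Phi_i(p_i)<\delta_i/\tau_i=\sqrt{\delta_i}\to0$. After rescaling, $r_H\ge 1/2$ on balls of radius at least $(2\sqrt{\delta_i})^{-1}\to\infty$, exactly as before.
\item For every $x\in B(p_i,\tau_i)$ we have $|B(x,1-\tau_i)|\ge|B(p_i,1-2\tau_i)|\ge(1-\Psi(\delta_i))\omega_n(1-2\tau_i)^n$.
\item Bishop--Gromov on $B(x,1-\tau_i)\subset B(p_i,1)$ carries this to every smaller radius. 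Hence the volume ratio at $x_i$ is at least $1-\Psi(\delta_i)-C(n)\tau_i\to 1$ at all scales.
\end{itemize}
With this change the rest of your argument goes through.

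\textbf{The last step.} What you need there is lower semicontinuity, $\liminf_i r_H(x_i,\tilde g_i)\ge r_H(x_\infty)$, not upper. Your bootstrap idea is the right one. The $L^p$ estimates for $g_{ij}$ in harmonic coordinates with $|\Ric|$ bounded give $W^{2,p}$ bounds, hence $C^{1,\beta}$ bounds with $\beta>\alpha$. So the convergence is actually $C^{1,\alpha}$, and continuity of $r_H$ in that topology applies.
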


\subsection{Conical function and approximate cone}

In the Cheeger-Colding theory \cite{CC97,CC00a,CC00b}, it is known that a tangent cone at any point of a noncollapsed Ricci limit space is a metric cone. This result can be more clearly formulated through the construction of a conical function. Let us recall the definition of conical functions.

\begin{Definition}\label{Def_conical function}
    Let $(M^n,g)$ be a complete Riemannian manifold and $p\in M$ a point. A function $h:B(p,r)\to\mathbb{R}$ is called an $(\varepsilon,r)$-conical function at $p$, if the following hold
    \begin{enumerate}[(1)]
        \item $\Delta h=2n$;
        \item $\displaystyle\dashint_{B(p,r)}|\nabla^2 h-2g|^2\,dg\le \varepsilon$;
        \item $\displaystyle\dashint_{B(p,r)}||\nabla h|^2-4h|\,dg\le \varepsilon r^2$;
        \item $\displaystyle\dashint_{B(p,r)}|\nabla (h-d^2(p,\cdot))|\,dg\le \varepsilon r$;
        \item $\displaystyle \sup_{B(p,r)}|h-d^2(p,\cdot)|\le \varepsilon r^2$,
    \end{enumerate}
    where $\dashint$ is the average integral.
\end{Definition}

If on a ball the Ricci curvature is almost nonnegative, and there exists a conical function, then the ball is close to a metric cone in the Gromov-Hausdorff sense. This is originally proved in \cite{CC96}; see also \cite{Ch01}. A clearer proof of the following theorem can be found in Jiang's notes \cite{J}.

\begin{Theorem}[\cite{CC96}, see also \cite{Ch01}]\label{Thm:conical function implying almost cone}
    Let $(M^n,g)$ be a complete manifold. Assume that there is an $(\varepsilon,r)$-conical function at some $p\in M$. Furthermore, assume $\Ric\ge -(n-1)\varepsilon r^{-2}$ on $B(p,r)$. Then there is a metric cone $(C,z^*)$, with $z^*$ being the tip, such that
    \begin{align*}
        d_{\operatorname{GH}}\Big(B(p,r/2),B(z^*,r/2)\Big)\le \Psi(\varepsilon)r.
    \end{align*}
\end{Theorem}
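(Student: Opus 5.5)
We argue by rescaling and contradiction. After scaling $g\mapsto r^{-2}g$ (and $h\mapsto r^{-2}h$) we may assume $r=1$, so that $\Ric\ge-(n-1)\varepsilon$ on $B(p,1)$ and $h$ is an $(\varepsilon,1)$-conical function. Suppose the statement fails. Then there are $\eta>0$ and a sequence $(M_i,g_i,p_i,h_i)$ with $\varepsilon_i\to0$ such that $B(p_i,1/2)$ stays $\eta$-far in the Gromov--Hausdorff sense from every ball $B(z^*,1/2)$ about the tip of a metric cone. By Bishop--Gromov, the lower Ricci bound gives uniform volume doubling on $B(p_i,1)$, so Gromov's precompactness theorem yields a subsequential limit $(B(p_i,1/2),p_i)\to(X,d_X,p_\infty)$. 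Renormalized measures also converge, to a doubling limit measure. It then suffices to show that $X$ is isometric to a ball $B(z^*,1/2)$ around the tip of a metric cone, which is a contradiction.

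\textbf{Step 1: the radial structure.} By item (5), $h_i\to d_X^2(p_\infty,\cdot)$ uniformly. By items (3) and (4), $|\nabla\sqrt{h_i}|\approx1$ and $\nabla h_i\approx\nabla d^2_{p_i}$ in $L^1$. Hence the gradient flow $\Phi^i_s$ of $\tfrac12\nabla h_i/|\nabla h_i|^{2}\cdot 2h_i$ (equivalently, of the almost-radial field $\nabla\sqrt{h_i}$) moves points along curves of almost unit speed that are almost minimizing geodesics from $p_i$, for most starting points. This gives a radial coordinate $\rho=d(p_\infty,\cdot)$ on $X$ whose rays are geodesics.

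\textbf{Step 2: from the integral Hessian bound to a cosine law.} This is the key step and the one I expect to be the main obstacle. We must turn the $L^2$ bound $\dashint|\nabla^2h_i-2g_i|^2\le\varepsilon_i$ of item (2) into pointwise metric information. The tool is the Cheeger--Colding segment inequality, which is available because of the Ricci lower bound. It shows that for a pair $(x,y)$ outside a set of small measure, the integral of $|\nabla^2h_i-2g_i|$ along the minimal geodesic $\gamma_{xy}$ is $\Psi(\varepsilon_i)$, and the same holds along most geodesics from $p_i$. Integrating twice along $\gamma_{xy}$ gives
\begin{align*}
h_i(\gamma(t))=(1-t)h_i(x)+t\,h_i(y)-t(1-t)d(x,y)^2+\Psi(\varepsilon_i),
\end{align*}
which is the identity satisfied by $d^2_{z^*}$ along segments in a metric cone. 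Combined with Step 1 (radial curves being almost geodesic), this yields the cone law of cosines
\begin{align*}
d(\Phi_a x,\Phi_b y)^2\approx a^2\rho(x)^2+b^2\rho(y)^2-2ab\,\rho(x)\rho(y)\cos\theta(x,y)
\end{align*}
for suitable dilations along the flow. It first holds for generic pairs, and then for all pairs by density, since the bad sets have small measure and volume doubling makes every point close to a good one.

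\textbf{Step 3: conclusion.} Passing to the limit, $X$ admits a family of dilations along rays from $p_\infty$ satisfying the exact cone formula. With $\theta$ defined through the formula on the unit cross-section $\{\rho=1/4\}$, this makes $X$ isometric to a ball $B(z^*,1/2)$ in the metric cone $C(Y)$, where $Y$ is that cross-section with angle metric $\theta$. This contradicts the choice of the sequence and proves the estimate with some function $\Psi(\varepsilon)\to0$. The delicate point throughout is Step 2: controlling the exceptional sets from the segment inequality uniformly enough to pass from almost-every pairs to a genuine Gromov--Hausdorff approximation. I would handle this with the standard argument that small-measure sets contain no balls of radius $\Psi(\varepsilon)$, which follows from volume doubling.
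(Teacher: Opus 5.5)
The paper gives no proof here; it quotes Cheeger--Colding. The benchmark is therefore the standard argument, which your outline resembles: segment inequality, gradient flow of $h$, cone law of cosines. But the mechanism that makes that argument work is missing, and you never use item (1), $\Delta h=2n$. Every ``for most starting points'' claim in Step 1, and every claim in Step 2 about pairs taken at different dilations, amounts to bounding flow-line integrals $\int_E\int_0^T F(\Phi_s x)\,ds\,dx$ by a multiple of $\int F$. Here $F$ is an error density such as $|\nabla^2h-2g|$ (fed through the segment inequality), $\big||\nabla h|^2-4h\big|$ or $|\nabla(h-d_p^2)|$. Such a bound needs a lower bound on the Jacobian $J_s$ of $\Phi_s$; otherwise the flow may funnel a set of large measure into the small exceptional set. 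For the flow of $\frac12\nabla h$ one has $\frac{d}{ds}\log J_s=\frac12\Delta h=n$, hence $J_s=e^{ns}$ and $\int_E F(\Phi_sx)\,dx=e^{-ns}\int_{\Phi_s(E)}F$; this is exactly what item (1) is for. The fields you propose, $h\nabla h/|\nabla h|^2$ and $\nabla\sqrt h$, give no such bound. They are not equivalent (their speeds are roughly $\sqrt h/2$ and $1$). They are singular at critical points of $h_i$, resp.\ where $h_i\le 0$. Their divergence is controlled at best in $L^1$.

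The same gap sits inside Step 2. The interpolation identity along $\gamma_{xy}$ for a fixed generic pair does not by itself give the cosine law. What one actually does is differentiate $L(s)=d(\Phi_sx,y)$:
\begin{itemize}
\item the first variation gives $L'=\frac12\langle\nabla h,\dot\gamma\rangle$ at the endpoint $\Phi_sx$ of the unit-speed segment $\gamma$ from $y$;
\item integrating the Hessian bound along $\gamma$ turns this into $L'\approx\big(h(\Phi_sx)-h(y)+L^2\big)/(2L)$;
\item with $h(\Phi_sx)\approx e^{2s}\rho_x^2$ this reads $(L^2)'\approx L^2+e^{2s}\rho_x^2-\rho_y^2$, whose solution is $L^2=e^{2s}\rho_x^2+\rho_y^2-2e^s\rho_x\rho_y\cos\theta$.
\end{itemize}
This requires the Hessian bound along $\gamma_{y,\Phi_sx}$ for almost every $s$. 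That is, you need control of the segment-inequality exceptional set \emph{transported by the flow}, which again comes only from $J_s=e^{ns}$. Your density argument (small-measure sets contain no $\Psi$-balls) treats an exceptional set at one fixed time and cannot replace this.

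Nor does passing to the limit rescue Step 2 on its own. The exact interpolation identity for all pairs, which is what (2) and (5) give you there, does not force a cone. The truncated disk $\{z\in\mathbb{R}^2:|z-p|<\frac12,\ z_2\ge0\}$ with $p=(0,0.3)$ satisfies it for every pair. Yet it is not a cone ball about $p$, since the segment from $p$ through the origin cannot be extended. Extendability of rays has to be produced on $M_i$ by the flow, and that again needs the Jacobian control. Replacing your field by $\frac12\nabla h$ and using $J_s=e^{ns}$ is what closes both gaps.
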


Note that in this paper, we adopt the notation $\Psi(a_1,a_2,\hdots\,|\,b_1,b_2,\hdots)$ to represent a positive number such that
\begin{align*}
    \Psi(a_1,a_2,\hdots\,|\,b_1,b_2,\hdots)\to 0 \qquad \text{ as }\qquad a_1\to0,a_2\to0,\hdots\ \text{ while }\ b_1,b_2,\hdots\ \text{ remain fixed.}
\end{align*}

\subsection{Structure of shrinker limit spaces}

In \cite{LLW21,HLW21}, Huang-Li-Li-Wang studied the ``Cheeger-Colding theory for Ricci shrinkers''and established several structural results for the Ricci shrinker limit spaces. We shall introduce a couple of results which we shall apply later.

Recall that for a shrinker $(M^n,g,f,p)$ normalized as \eqref{eq:shrinker_equation} and \eqref{eq:normalization}, the shrinker entropy $\mu(M)$ is defined to be the nonpositive number such that
\begin{align*}
    \int_M e^{f+\mu(M)}\,dg=(4\pi)^{n/2}.
\end{align*}
It is observed by Carrillo-Ni \cite{CN09} and Li-Wang\cite{LW20} that
\begin{align*}
    \mu(M)=\mu(g,1)=\nu(g), 
\end{align*}
where $\mu$ and $\nu$ are Perelman's functionals \cite{Per02}.

First of all, for a shrinker, volume noncollapsedness is equivalent to entropy noncollapsedness.

\begin{Lemma}[{\cite[Lemma 2.2, Lemma 2.3]{LLW21}}]\label{Lm:volume and entropy noncollapsed}
    Let $(M^n,g,f,p)$ be a normalized Ricci shrinker. Then for any $v>0$, there is a $Y<+\infty$, such that if $|B(p,1)|\ge v$, then $\mu(M)\ge -Y$. Furthermore, for any $Y<+\infty$, there is a $v>0$, such that if $\mu(M)\ge -Y$, then $|B(p,1)|\ge v$.
\end{Lemma}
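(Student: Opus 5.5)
The proof treats the two implications separately. In both I would use the standard facts about normalized shrinkers: $R\ge 0$ (Chen \cite{Che09}), so $f=|\nabla f|^2+R\ge 0$; the Cao--Zhou/Haslhofer--Müller estimate
\begin{align*}
\tfrac14\big(d(p,x)-5n\big)_+^2\le f(x)\le \tfrac14\big(d(p,x)+\sqrt{2n}\big)^2 ;
\end{align*}
and the polynomial volume growth $|B(p,r)|\le C(n)r^n$. I read the normalization of $\mu(M)$ in the standard way, $\int_M e^{-f}\,dg=(4\pi)^{n/2}e^{\mu(M)}$, together with the identity $\mu(M)=\mu(g,1)=\nu(g)$ recalled above.

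\emph{Volume implies entropy.} This direction is immediate. On $B(p,1)$ we have $f\le \tfrac14(1+\sqrt{2n})^2=:C_1(n)$, hence
\begin{align*}
(4\pi)^{n/2}e^{\mu(M)}=\int_M e^{-f}\,dg\ge \int_{B(p,1)}e^{-f}\,dg\ge e^{-C_1(n)}\,v .
\end{align*}
This gives $\mu(M)\ge \log v-C_1(n)-\tfrac n2\log(4\pi)=:-Y$.

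\emph{Entropy implies volume.} Here the point is to turn the entropy bound into a local noncollapsing statement at $p$.

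1. Since $\nu(g)=\mu(M)\ge -Y$, we get $\mu(g,\tau)\ge -Y$ for every $\tau>0$. This is a uniform logarithmic Sobolev inequality at all scales.

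2. On $B(p,1)$ we have $R\le f\le C_1(n)$. Put $r_0:=\min\{1,C_1(n)^{-1/2}\}$, so that $R\le r_0^{-2}$ on $B(p,r_0)$.

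3. Apply Perelman's no-local-collapsing argument to the log-Sobolev inequality at scale $\tau=r^2$. This yields $|B(p,r_0)|\ge \kappa(n,Y)\,r_0^n$, and hence $|B(p,1)|\ge v(n,Y)>0$.

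The main obstacle is step 3. Perelman's argument plugs in a cutoff $u$ supported on $B(x,r)$ and equal to a constant on $B(x,r/2)$. To close it one needs a volume-doubling bound on the ball, and that normally comes from a lower Ricci bound, which we do not have here. I would handle this with the usual descent trick. Suppose $|B(p,r_0)|\le \kappa r_0^n$ with $\kappa$ tiny. Look at the radii $r_k=2^{-k}r_0$. Because smooth balls are Euclidean at tiny scales, the ratio $|B(p,r_k)|/r_k^n$ eventually returns to about $\omega_n$. Take the first scale at which the ratio doubling fails, in the sense that $|B(p,r_{k})|\ge 3^{-n}|B(p,r_{k-1})|$ while the volume ratio at $r_{k-1}$ is still small. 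At that scale $R\le r_0^{-2}\le r_{k-1}^{-2}$ still holds. The cutoff test function then gives $\mu(g,r_{k-1}^2)\le \log(\text{volume ratio})+C(n)$, which contradicts $\mu\ge -Y$ once $\kappa$ is small.

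As a fallback, there is an alternative route to step 3 that avoids doubling. Use the tail estimate: choose $r=r(n,Y)$ so that $\int_{M\setminus B(p,r)}e^{-f}\le \tfrac12(4\pi)^{n/2}e^{-Y}$, which forces $|B(p,r)|\ge c(n,Y)$ because $e^{-f}\le 1$. Then transfer this to the unit ball through a relative comparison for the weighted volume $e^{-f}dg$, using that its Bakry--Émery Ricci curvature $\Ric+\nabla^2 f=\tfrac12 g$ is bounded below. The potential $f$ is controlled on $B(p,r)$ by the quadratic bound above, so the weighted comparison loses only a constant $C(n,Y)$, and the two weights differ by a bounded factor.
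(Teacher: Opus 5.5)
Your argument is correct and is essentially the standard one behind the cited lemmas of \cite{LLW21} (the paper itself gives no proof). Volume $\Rightarrow$ entropy comes from inserting $f\le\frac14(1+\sqrt{2n})^2$ on $B(p,1)$ into $\int_M e^{-f}\,dg=(4\pi)^{n/2}e^{\mu(M)}$. Entropy $\Rightarrow$ volume comes from $R\le f\le C(n)$ on $B(p,1)$ combined with Perelman-type no-local-collapsing from $\mu(g,\tau)\ge\nu(g)=\mu(M)$, which \cite{LW20} packages as their no-local-collapsing estimate; your dyadic descent argument correctly supplies the missing doubling. Your fallback via the tail estimate and the Wei--Wylie weighted volume comparison also works and avoids the identity $\nu(g)=\mu(M)$, but its dependence of $v$ on $Y$ is much worse than the linear bound $|B(p,1)|\ge c(n)e^{\mu(M)}$ that your main route gives.
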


Now we summarize the properties of noncollapsed shrinker limit spaces.

\begin{Theorem}[{\cite[Theorem 1.1, Proposition 9.9]{LLW21}}]\label{Thm:shrinker limit}
    Let $\{(M^n_i,g_i,f_i,p_i)\}_{i=1}^\infty$ be a sequence of normalized Ricci shrinkers. Suppose that there is a $Y<+\infty$, such that
    \begin{align*}
        \mu(M_i)\ge -Y\qquad \text{ for all }\qquad i.
    \end{align*}
    Then, after passing to a subsequence, we have
    \begin{align*}
        (M_i,d_{g_i},p_i)\xrightarrow{\ \ \text{ pointed-Gromov-Hausdorff}\ \ }(X,d,p),
    \end{align*}
    where $(X,d,p)$ is a metric space admitting a decomposition
    \begin{align*}
        X=\mathcal{R}\sqcup\mathcal{S},
    \end{align*}
    such that:
    \begin{enumerate}[(1)]
        \item The Hausdorff dimension of $\mathcal{S}$ is no greater than $n-4$;
        \item There is a shrinker structure on $\mathcal{R}$. In other words, $\mathcal{R}$ is a smooth manifold with a Riemannian metric $g$ and potential function $f$, such that 
        \begin{align*}
            \Ric +\nabla^2f=\frac{1}{2}g.
        \end{align*}
        \item $g_i$ and $f_i$  converge smoothly to $g$ and $f$ on $\mathcal{R}$, respectively;
        \item If $R_g$ vanishes at some point in $\mathcal{R}$, then $(X,d,p)$ is a Ricci flat cone at $p$.
    \end{enumerate}
\end{Theorem}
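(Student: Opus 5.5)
Since this theorem is quoted from \cite{LLW21}, I would reconstruct its proof in four steps: compactness, an $\varepsilon$-regularity dichotomy, a codimension estimate, and a rigidity argument via the maximum principle.

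\textbf{Compactness.} By Lemma~\ref{Lm:volume and entropy noncollapsed}, the uniform bound $\mu(M_i)\ge -Y$ gives $|B(p_i,1)|\ge v(Y)$. To get Gromov precompactness we need a uniform volume doubling on $B(p_i,r)$ for each fixed $r$. Ricci curvature is not bounded below in general, but the Bakry--\'Emery tensor satisfies $\Ric+\nabla^2 f=\frac12 g$. The normalization \eqref{eq:normalization} together with the growth estimate $f\approx d^2(p,\cdot)/4$ of \cite{CZ10} gives $|\nabla f|\le \sqrt f\le C(r)$ on $B(p_i,r)$. The weighted volume comparison for $\Ric_f\ge \frac12$ with bounded $|\nabla f|$ then gives local doubling. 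Gromov's theorem yields a pointed GH-convergent subsequence with limit $(X,d,p)$, and $f_i$ subconverges to a locally Lipschitz function on $X$.

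\textbf{Regular/singular decomposition.} Define $\mathcal{R}$ as the set of points near which the approximating metrics have uniformly bounded harmonic radius, equivalently uniformly bounded curvature and derivatives. On such neighborhoods Cheeger--Gromov compactness gives smooth convergence of $g_i$. The shrinker equation is an elliptic system for $(g,f)$ in harmonic coordinates, so $f_i$ also converges smoothly and the limit satisfies $\Ric+\nabla^2 f=\frac12 g$; this gives (2) and (3). The key input is an $\varepsilon$-regularity statement: if a ball in $X$ is sufficiently GH-close to a Euclidean ball, its center lies in $\mathcal{R}$. I would prove this by passing to the associated Ricci flow $g_i(t)=(1-t)\varphi_t^*g_i$ and applying Perelman's pseudolocality. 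The uniform entropy bound supplies the needed local $\nu$ control, which then yields curvature bounds and hence harmonic-radius bounds via Theorem~\ref{Thm:critical radius}.

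\textbf{Codimension estimate.} This is the main obstacle. Tangent cones of $X$ are metric cones: conical functions in the sense of Definition~\ref{Def_conical function} are built from $f$-weighted Cheeger--Colding estimates, and Theorem~\ref{Thm:conical function implying almost cone} applies after rescaling, since the rescaled Bakry--\'Emery bound becomes almost nonnegative. The resulting dimension reduction gives $\dim_H\mathcal S\le n-2$. To reach $n-4$, one must rule out codimension-two and codimension-three singularities, following Cheeger--Naber. I would run this through the Ricci flow picture, using Bamler's structure theory for noncollapsed flows, or alternatively an $f$-weighted version of the Cheeger--Naber codimension-four argument.

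\textbf{Item (4).} On $\mathcal R$ the scalar curvature satisfies
\begin{align*}
\Delta_f R = R-2|\Ric|^2 ,
\end{align*}
and $R\ge 0$ by \cite{Che09} and passage to the limit. If $R$ vanishes at some point of $\mathcal R$, the strong maximum principle gives $R\equiv0$ on the component of $\mathcal R$ containing it. This component is all of $\mathcal R$, because a singular set of codimension at least $2$ (indeed at least $4$) does not disconnect. From the equation, $|\Ric|^2=\frac12 R=0$, so $\Ric\equiv0$ and $\nabla^2 f=\frac12 g$ on $\mathcal R$. Then $|\nabla f|^2=f$ on $\mathcal R$ by \eqref{eq:normalization}, so $\nabla(2\sqrt f)$ is a unit gradient field. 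Since $\mathcal R$ is dense and almost convex, I would conclude that $2\sqrt f=d(p,\cdot)$. Combined with $\nabla^2(d^2/4)=\frac12 g$, the flow lines of $\nabla f$ then exhibit $X$ as a Ricci-flat metric cone with vertex $p$.
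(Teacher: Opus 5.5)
There is a genuine gap. The paper does not prove this statement; it quotes it from \cite{LLW21}, and it uses that reference's key tool in Section 3 via \cite[Theorem 3.6]{LLW21}. Your compactness step (weighted volume comparison with $|\nabla f|\le\sqrt f$) is fine. The problem is that you never obtain a two-sided Ricci bound, and every step that needs one is either misjustified or left open.

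(i) In the $\varepsilon$-regularity step, the bound $\mu(M_i)\ge -Y$ does \emph{not} supply the local entropy that pseudolocality needs. It only gives $\nu\ge -Y$, which is noncollapsing. Pseudolocality in the form used for Theorem \ref{Thm:nu gap} needs $\nu(B(x,r),g,r^2)\ge-\varepsilon_n$, that is, almost-Euclidean entropy. Getting this from GH-closeness to $B_e(1)$ would require volume convergence and an almost-Euclidean log-Sobolev inequality on the ball. You have neither, because $\Ric_g$ is not bounded below.

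(ii) For the same reason, Theorem \ref{Thm:conical function implying almost cone} does not apply. It assumes $\Ric\ge-(n-1)\varepsilon r^{-2}$, and rescaling a Bakry--\'Emery bound does not produce this, since $\Ric=\frac12 g-\nabla^2 f$ with $\nabla^2 f$ uncontrolled.

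(iii) Item (1), the heart of the theorem, is not proved. Saying you would ``run it through Bamler'' or use ``an $f$-weighted Cheeger--Naber'' is not an argument. Bamler's theory describes $\mathbb{F}$-limits of flows, and identifying these with GH limits of $(M_i,d_{g_i})$ is itself nontrivial. Moreover, under only a lower (Bakry--\'Emery) Ricci bound, codimension-two singularities such as $\mathbb{R}^{n-2}\times C(S^1_\beta)$ genuinely occur. So the Cheeger--Naber mechanism needs two-sided control that your setup lacks.

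The missing idea is the conformal change $\bar g=e^{-2(f-f(p))/(n-2)}g$ for $n\ge3$. Combine the conformal transformation formula with $\Ric+\nabla^2 f=\frac12 g$, $\Delta f=\frac n2-R$ and \eqref{eq:normalization}. This gives
\begin{align*}
\Ric_{\bar g}=\frac{1}{n-2}\Big(df\otimes df+(n-1-f)\,g\Big).
\end{align*}
This is bounded on $B_g(p,r)$, since $0\le f\le\frac14(r+\sqrt{2n})^2$ there. On such balls $\bar g$ and $g$ are uniformly bi-Lipschitz, so noncollapsing transfers. Everything then reduces to noncollapsed manifolds with bounded Ricci curvature:
\begin{itemize}
\item Colding's volume convergence together with Theorem \ref{Thm:critical radius} gives harmonic radius bounds for $\bar g_i$ near points with Euclidean tangent cones.
\item The elliptic equation $\Delta f=|\nabla f|^2+\frac n2-f$, together with the formula above, bootstraps this to smooth convergence of $(g_i,f_i)$, giving (2)--(3).
\item The Cheeger--Naber codimension-four theorem for bounded Ricci curvature gives (1). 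Hausdorff dimension is preserved under the local bi-Lipschitz identification of the two limits.
\end{itemize}

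Your sketch of (4) is sensible once (1) is available, but it has dependencies you should note. The connectedness and almost-convexity of $\mathcal R$ that it uses come from (1), or from \cite{HLW21}. The identity $2\sqrt f=d(p,\cdot)$ needs three things: the global $1$-Lipschitz bound for $2\sqrt{f_i}$, which follows from $|\nabla f_i|^2\le f_i$; the reverse inequality along integral curves of $\nabla f$ inside $\mathcal R$; and $f(p)=0$.
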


\section{The proof of Theorem \ref{Thm:GH gap}
}

Though not stated explicitly in \cite{LLW21}, the essential ideas of Theorem \ref{Thm:GH gap} 
are all contained therein. In this section, we briefly explain how this result 
can be induced from  \cite{LLW21}.

For Theorem \ref{Thm:GH gap}, arguing by contradiction,  we suppose that there is a sequence of Ricci shrinkers $\{(M_i^n, g_i,f_i, p_i)\}_{i=1}^\infty$ with 
\begin{align}\label{eq:contradic GH}
    d_{\operatorname{GH}}\Big(B_{g_i}(p_i,1),B_e(1)\Big)<\varepsilon_i,
\end{align}
where $\varepsilon_i\to 0$, but none of them is the gaussian shrinker.

Suppose for the moment that the sequence is noncollapsed, namely,  there is some $Y<+\infty$, such that for all $i$
\begin{align*}
    \mu(M_i)\ge -Y.
\end{align*}
Then, by Theorem \ref{Thm:shrinker limit}, we can take a limit for a subsequence. Denote by $(X,d,p)$ the limit space and $X=\mathcal{R}\sqcup\mathcal{S}$ its decomposition. Since $B(p,1)$ is isometric to $B_e(1)$, we immediately have that, $B(p,1)\subset\mathcal{R}$ and the curvature vanishes therein. Consequently, $(X,p)$ is a cone at $p$ by Theorem \ref{Thm:shrinker limit} (4). While the cone tip $p$ is a smooth point, such a cone can be nothing but the Euclidean space. Thus $(M_i,d_{g_i},p_i)$ converges to the Euclidean space in the pointed Gromov-Hausdorff sense. On the other hand, \cite[Corollary 7]{LW20} shows that any shrinker close enough to the Euclidean space in the pointed-Gromov-Hausdorff sense must be the gaussian shrinker. So $(M_i^n, g_i,f_i, p_i)$ is a gaussian shrinker for all $i$ large enough. This contradicts our assumption on the sequence.

Thus, it remains only to show that the sequence $\{(M_i^n, g_i,f_i, p_i)\}_{i=1}^\infty$ is noncollapsed. Indeed, this is nothing but \cite[Proposition 5.8]{LLW21}. However, since that theorem is stated in terms of the global Gromov-Hausdorff convergence, while we only assume a local condition, we shall sketch a proof of the noncollapsedness of the contradictory sequence.

For each $g_i$, we define the conformal metric
\begin{align*}
    \bar g_i:=e^{-\frac{2(f_i-f_i(p_i))}{n-2}}g_i.
\end{align*}
Then, by \cite[Theorem 3.6]{LLW21}, there is a dimensional constant $D$, such that
\begin{enumerate}[(1)]
    \item For each $\rho\in(0,D^{-1}]$, we have
    \begin{align*}
        d_{\operatorname{GH}}\Big(B_{g_i}(p_i,\rho),B_{\bar g_i}(p_i,\rho)\Big)\le D\rho^2.
    \end{align*}
    \item $\displaystyle |{\Ric_{\bar g_i}}|\le D^2$ on $B_{\bar g_i}(p_i,(10D)^{-1})$.
\end{enumerate}

Since (1) and \eqref{eq:contradic GH} imply that
\begin{align*}
    d_{\operatorname{GH}}\Big(B_{\bar g_i}(p_i,\rho),B_e(\rho)\Big)\le (D\rho+\Psi(\varepsilon_i\,|\,\rho))\rho,
\end{align*}
the Ricci curvature bound (2) together with Colding's volume continuity theorem \cite{Cold97}  shows that, if we fix $\rho=\rho(n)\ll 1$ to be small enough, then
\begin{align*}
    |B_{\bar g_i}(p_i,\rho)|\ge \tfrac{1}{2}\omega_n\rho^n=c(n)\qquad \text{ for all $i$ large enough.}
\end{align*}
Hence, there is a unform lower bound for $|B_{g_i}(p_i,1)|$, and we have proved the noncollapsedness for the sequence by Lemma \ref{Lm:volume and entropy noncollapsed}.

\section{The proof of Theorem \ref{Thm:Ric gap}}

The main techniques of the proof are the Cheeger-Colding theory and the structure theorem of the shrinker limit spaces \cite{LLW21}. First of all, we estimate a ``critical radius'' for shrinkers with small Ricci curvature in the unit ball. This radius is called the ``volume radius'' by \cite[Definition 4.1]{LLW21}.

\begin{Lemma}\label{Lm:critical radius estimate}
    There are dimensional constants $\varepsilon(n)>0$ and $r_0(n)>0$ such that if $(M,g,f,p)$ is a complete gradient shrinker with $\Ric\leq \varepsilon g$ on $B(p,1)$, then  
    \[
      r:=\sup\{s\in (0,1]: |B_g(p,s)|\ge \omega_n(1-\delta)s^n \}\ge r_0(n)
    \]
    Here $\delta=\delta(n)$ is the constant in the statement of Theorem \ref{Thm:critical radius}.
   \end{Lemma}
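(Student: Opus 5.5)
The plan is to prove the lemma directly, without a contradiction argument. I will show that on $B(p,1)$ the potential $f$ is a quantitatively almost Euclidean function $|x|^2/4$. Then I compute the volume of its sublevel sets by a divergence-theorem ODE argument, and compare sublevel sets with metric balls.

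\emph{Step 1: pointwise control of $f$.} By the Remark after Theorem \ref{Thm:Ric gap}, $R>0$ and $\Ric\le\varepsilon g$ give $|{\Ric}|\le (n-1)\varepsilon$ on $B(p,1)$. Hence \eqref{eq:shrinker_equation} yields $|\nabla^2 f-\tfrac12 g|\le C\varepsilon$ there. Since $p$ is a minimum point, $\nabla f(p)=0$, and \eqref{eq:normalization} gives $f(p)=R(p)\le C\varepsilon$. Set $u:=f-f(p)\ge 0$. Integrating the Hessian bound twice along minimal geodesics from $p$ (which stay in $B(p,1)$) gives
\begin{align*}
\Big|u-\tfrac14 d^2(p,\cdot)\Big|\le C\varepsilon\, d^2(p,\cdot),\qquad \Big|\,|\nabla u|-\tfrac12 d(p,\cdot)\Big|\le C\varepsilon\, d(p,\cdot)\qquad\text{on } B(p,1).
\end{align*}
In particular $u$ has no critical points in $B(p,1)\setminus\{p\}$. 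Its sublevel sets $\Omega_t:=\{u<t\}$ satisfy $B(p,2\sqrt t(1-C\varepsilon))\subset\Omega_t\subset B(p,2\sqrt t(1+C\varepsilon))$ for $t\le 1/8$. I also need the relative estimate
\begin{align*}
\big|\,|\nabla u|^2-u\,\big|\le C\varepsilon\, u .
\end{align*}
This does not follow from \eqref{eq:normalization} alone, since $R/u$ need not be small near $p$. Instead I use $\nabla(|\nabla u|^2-u)=-2\Ric(\nabla u)$, so $|\nabla(|\nabla u|^2-u)|\le C\varepsilon|\nabla u|$. I integrate this along the gradient flow lines of $u$ emanating from $p$; along them $du=|\nabla u|\,ds$. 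The error then accumulates like $C\varepsilon\int |\nabla u|\,ds\approx C\varepsilon\,u$, using the comparability $|\nabla u|^2\approx u$ already available up to $O(\varepsilon d^2)$ from the display above.

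\emph{Step 2: volume of sublevel sets.} Let $V(t)=|\Omega_t|$. By the coarea formula and the divergence theorem,
\begin{align*}
V'(t)=\int_{\{u=t\}}\frac{1}{|\nabla u|},\qquad \int_{\Omega_t}\Delta u=\int_{\{u=t\}}|\nabla u| .
\end{align*}
Here $\Delta u=\tfrac n2-R\in[\tfrac n2-C\varepsilon,\tfrac n2]$ and $|\nabla u|^2=t(1+O(\varepsilon))$ on $\{u=t\}$. This gives
\begin{align*}
V'(t)=\frac{n}{2t}V(t)\big(1+O(\varepsilon)\big),
\end{align*}
so $V(t)t^{-n/2}$ has logarithmic derivative $O(\varepsilon)/t$. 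To control $V(t)t^{-(n/2)(1\pm C\varepsilon)}$, I fix the constant at $t\to0$ using the smoothness of $g$ at $p$ together with $\nabla^2u(p)=\tfrac12 g+O(\varepsilon)$. This gives
\begin{align*}
\lim_{t\to0}V(t)t^{-n/2}=2^n\omega_n\det(2\nabla^2u(p))^{-1/2}=2^n\omega_n(1+O(\varepsilon)).
\end{align*}
Integrating the differential inequality from $0$ to $t\le 1/8$ costs a factor $t^{\pm C\varepsilon}$, which is harmless for $t$ bounded below. Hence
\begin{align*}
V(t)\ge 2^n\omega_n t^{n/2}(1-\Psi(\varepsilon))\qquad\text{for } t\in[1/100,1/8].
\end{align*}

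\emph{Step 3: conclusion.} Take $s=1/2$ and $t=\tfrac1{16}(1-C\varepsilon)^2$, so that $\Omega_t\subset B(p,1/2)$. This gives
\begin{align*}
|B(p,1/2)|\ge \omega_n 2^{-n}(1-\Psi(\varepsilon)),
\end{align*}
which is at least $\omega_n(1-\delta)(1/2)^n$ once $\varepsilon(n)$ is small, so $r_0=1/2$ works.

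The main obstacle is the relative estimate $|\,|\nabla u|^2-u|\le C\varepsilon u$ near $p$, which is needed to keep the ODE errors integrable as $t\to0$. If the gradient-line argument proves delicate, a fallback is available. For $t$ smaller than some fixed $t_0(n)$, Euclidean smallness of tiny balls already gives $V(t)\approx 2^n\omega_n t^{n/2}$ in a weaker form. Alternatively, one can run the ODE only on $[t_0,1/8]$ and anchor it at $t_0$ via Bishop--Gromov comparison on $B(p,2\sqrt{t_0})$, using $\Ric\ge-(n-1)\varepsilon$ there.
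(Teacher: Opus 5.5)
\textbf{There is a genuine gap in Step 2: the ODE is never anchored at a controlled scale.}

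Your Step 1 is essentially fine. The relative estimate you flag as the main obstacle is in fact easy. Since $|\nabla u|^2-u=R(p)-R$ and $|\nabla R|=2|\Ric(\nabla f)|\le C\varepsilon|\nabla u|\le C\varepsilon\, d(p,\cdot)$, integrating along a minimal geodesic from $p$ gives $\big||\nabla u|^2-u\big|\le C\varepsilon d^2\le C\varepsilon u$. You should also set $\Omega_t:=\{u<t\}\cap B(p,1)$, because a priori $\{u<t\}$ may have components outside $B(p,1)$, where you have no control. With these adjustments your ODE $tV'/V=\frac n2(1+O(\varepsilon))$ is correct.

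The problem is anchoring it. Write $F(t)=\log\big(V(t)t^{-n/2}\big)$. All you know is $|F'(t)|\le C\varepsilon/t$ and $F(0^+)=\log(2^n\omega_n)+O(\varepsilon)$, and this does not determine $F(1/16)$ up to $\Psi(\varepsilon)$.
\begin{itemize}
\item Integrating from $s$ to $t$ costs a factor $(t/s)^{\pm C\varepsilon}$, not $t^{\pm C\varepsilon}$, and this blows up as $s\to0$.
\item Equivalently, your monotone quantities $V(t)t^{-\frac n2(1\mp C\varepsilon)}$ tend to $0$ and $+\infty$ respectively as $t\to0$. So the value of the limit at $p$ carries no information at a fixed $t$.
\item Concretely, take $F\equiv F_0$ on $(0,t_*]$ and $F(t)=F_0-C\varepsilon\log(t/t_*)$ for $t>t_*$. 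This meets every constraint you have, yet $F(1/16)\to-\infty$ as $t_*\to0$.
\item The scale $t_*$ below which $g$ looks Euclidean at $p$ is governed by the full curvature tensor and its derivatives at $p$. A Ricci bound does not control these.
\end{itemize}
Your fallbacks do not close the gap. A fixed $t_0(n)$ below which balls at $p$ are almost Euclidean is essentially what the lemma asserts, so assuming it is circular. Bishop--Gromov under $\Ric\ge-(n-1)\varepsilon$ only bounds volume ratios at larger scales \emph{from above} by those at smaller scales. It can never give a lower bound for $|B(p,2\sqrt{t_0})|$, since collapse is compatible with Ricci bounds.

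\textbf{What is missing, and how it can be repaired.} You need a uniform regularity statement at a definite fraction of the relevant scale. The paper gets this from Anderson's $\varepsilon$-regularity applied at the critical radius $r$ itself. After rescaling by $r^{-2}$ one has $|B(p,1)|=(1-\delta)\omega_n$ and tiny Ricci curvature, so the harmonic radius at $p$ is at least $\delta$. The paper then blows up, builds conical functions from $f$, and uses Cheeger--Colding together with Colding's volume convergence. This produces a cone with a smooth tip, hence $\mathbb{R}^n$, contradicting the volume ratio $1-\delta$.

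Your ODE can be rescued with the same anchor, and this gives a more elementary argument that avoids cones and volume convergence:
\begin{enumerate}
\item If $r<1/4$, say, then $|B(p,r)|=(1-\delta)\omega_nr^n$.
\item Anderson's theorem (via $C^{1,\alpha}$ harmonic coordinates on $B(p,\delta r)$) gives $|B(p,\eta\delta r)|\ge(1-C\eta)\omega_n(\eta\delta r)^n$.
\item Run your sublevel-set ODE only across the boundedly many scales between $\eta\delta r$ and $r$, converting between balls and sublevel sets with $(1\pm C\varepsilon)$ factors. This loses only a factor $(\eta\delta)^{C\varepsilon}$.
\item Choose $\eta=\eta(n)$ small and then $\varepsilon=\varepsilon(n)$ small. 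This yields $|B(p,r)|>(1-\delta)\omega_nr^n$, a contradiction.
\end{enumerate}
As written, however, Step 2, and hence Step 3, is unsupported.
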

\begin{proof}
    We argue by contradiction. Suppose there exist sequences $\varepsilon_i\to 0$ and $(M_i,g_i, f_i, p_i)$ such that the corresponding 
    $$r_i=\sup\{s\in(0,1]\,:\,|B_{g_i}(p_i,s)|\ge \omega_n(1-\delta)s^n\}\to 0.$$
    Let $\bar g_i:=r_i^{-2}g_i$. Then 
    \begin{align}\label{eq:condition of blow-up sequence}
        |B_{\bar g_i}(p_i,1)|_{\bar g_i} = \omega_n(1-\delta),\qquad |{\Ric}(\bar g_i)|\leq \varepsilon_ir^2_i\quad \text{ on }\quad  B_{\bar g_i}(p_i,r_i^{-1}).
    \end{align}
    It immediately follows from Theorem \ref{Thm:critical radius} that
    \begin{align}\label{eq:harrad lower bound}
        r_H(p_i,\bar g_i)\ge \delta\quad \text{ for all $i$ large enough}.
    \end{align}
 
 Next, by the soliton equation, we have
    \[
    r^{-2}_i\Ric(\bar g_i)+\nabla^2_{\bar g_i} (r_i^{-2}f_i)=\frac 12 \bar g_i
    \]
    and the Ricci curvature bounds imply that \begin{align}\label{eq: f-Hess}
        \left(\tfrac12-\varepsilon_i\right)\bar g_i\le \nabla^2_{\bar g_i} (r_i^{-2}f_i)\le\left(\tfrac12+\varepsilon_i\right)\bar g_i \quad \text{ on }\quad B_{\bar g_i}(p_i,r_i^{-1})
    \end{align}
    for all $i$. Since $p_i$ is the minimum point, it is easy to see that $\bar f_i:=r_i^{-2}f_i$ is almost an conical function. Indeed, the reader can readily check that $\bar f_i:=r_i^{-2}f_i$ satisfies Definition \ref{Def_conical function} (2)---(4) up to a constant multiple. We shall construct conical functions starting with $\bar f_i$.
    
    For notational simplicity, we shall define
    \begin{align*}
        h_i(x)=4(\bar f_i(x)-\bar f_i(p_i))= 4 r_i^{-2}(f_i(x)-f_i(p_i)).
    \end{align*}
Let us fix an arbitrary $A\gg 1$. For $i$ large enough such that $r_i^{-1} > 2A$, we define $\bar h_i: B_{\bar g_i}(p_i,2A)\to \mathbb{R}$ to be the solution of the Dirichlet problem
\begin{eqnarray}\label{eq:Dirichlet}
    \Delta_{\bar g_i} \bar h_i\ =\ 2n &\text{  on  }& B_{\bar g_i}(p,2A)\\\nonumber
     \bar h_i\ =\ \,  h_i &\text{  on  }& \partial B_{\bar g_i}(p,2A).
\end{eqnarray}
Obviously, $\bar h_i$ satisfies (1) of Definition \ref{Def_conical function}. We shall verify conditions (2)---(5) of Definition \ref{Def_conical function} to show that $\bar h_i$ is a $(\Psi(\varepsilon_i\,|\, A),A)$-conical function at $p_i$.
\\

\noindent\textbf{Verification of (5) $\displaystyle \sup_{B_{\bar g_i}(p_i,A)}|\bar h_i-d_{\bar g_i}^2(p_i,\cdot) |\le \Psi(\varepsilon_i\,|\, A)$.}
\\

By the definition of $h_i$ and by \eqref{eq: f-Hess}, we have
\begin{align}\label{eq:h-Hess}
    2(1-2\varepsilon_i)\bar g_i\leq \nabla^2_{\bar g_i} h_i\leq 2(1+2\varepsilon_i)\bar g_i.
\end{align}
Since $p_i$ is the minimum point of $h_i$, and since $h_i(p_i)=0$, picking any unit geodesic $\gamma(t)$ with $\gamma(0)=p_i$, and integrating over
\begin{align*}
  \frac{d^2}{dt^2}h_i(\gamma(t)) =\nabla^2 h_i(\gamma'(t),\gamma'(t))=2(1\pm2\varepsilon_i) 
  \\
  \left.\frac{d}{dt}\right|_{t=0}h_i(\gamma(t))=\langle\nabla h_i(p_i),\gamma'(0)\rangle=0,
\end{align*}
we obtain that, if $t\le 2A$, then
\begin{align}\label{eq:grad-lower}
    |\nabla_{\bar g_i}h_i|(\gamma(t))\ge \langle\nabla _{\bar g_i}h_i,\gamma'(t)\rangle\ge 2t-4\varepsilon_iA,
    \\\nonumber
    h_i(\gamma(t))=t^2\pm8\varepsilon_iA^2.
\end{align}
In other words, we have
\begin{align}\label{eq:h-close-to-d2}
    \sup_{B_{\bar g_i}(p_i,2A)}|h_i-d_{\bar g_i}^2(p,\cdot)|\le 8\varepsilon_iA^2.
\end{align}

To get the same bound for $\bar h_i$, let us observe that by \eqref{eq:h-Hess} 
\begin{align*}
    2n(1-2\varepsilon_i)\le \Delta_{\bar g_i}h_i\le 2n(1+2\varepsilon_i).
\end{align*}
In view of the definition \eqref{eq:Dirichlet} of $\bar h_i$, we consider the auxiliary function
\begin{align*}
    \varphi_i:=\bar h_i-\frac{1}{1+2\varepsilon_i}h_i-\frac{16\varepsilon_iA^2}{1+2\varepsilon_i}.
\end{align*}
On $\partial B_{\bar g_i}(p_i,2A)$, we have
\begin{align*}
    \varphi_i=\frac{2\varepsilon_i}{1+2\varepsilon_i}h_i-\frac{16\varepsilon_iA^2}{1+2\varepsilon_i}\le \frac{2\varepsilon_i(4A^2+8\varepsilon_iA^2)}{1+2\varepsilon_i}-\frac{16\varepsilon_iA^2}{1+2\varepsilon_i}<0.
\end{align*}
Note that we have applied \eqref{eq:h-close-to-d2}. In the interior of $B_{\bar g_i}(p_i,2A)$, we have
\begin{align*}
    \Delta_{\bar g_i}\varphi_i =\Delta_{\bar g_i}\bar h_i-\frac{1}{1+2\varepsilon_i}\Delta_{\bar g_i}h_i \ge 2n-\frac{(1+2\varepsilon_i)2n}{1+2\varepsilon_i}=0.
\end{align*}
The standard maximum principle immediately implies that $\varphi_i\le 0$ on $B_{\bar g_i}(p_i,2A)$. By a similar argument for the auxiliary function
\begin{align*}
    \psi_i:=\bar h_i-\frac{1}{1-2\varepsilon_i}h_i+\frac{16\varepsilon_i A^2}{1-2\varepsilon_i},
\end{align*}
we may obtain $\psi_i\ge 0$ on $B_{\bar g_i}(p_i,2A)$. In summary, we get
\begin{align}\label{eq:h bar h closeness}
    \sup_{B_{\bar g_i}(p_i,2A)}|\bar h_i-h_i|\le \Psi(\varepsilon_i\,|\,A).
\end{align}
Taking \eqref{eq:h-close-to-d2} into account, we have verified 
\begin{align}\label{eq:item 5}
    \sup_{B_{\bar g_i}(p_i,2A)}|\bar h_i-d_{\bar g_i}^2(p,\cdot)|\le \Psi(\varepsilon_i\,|\,A),
\end{align}
that is, (5) of Definition \ref{Def_conical function}.
\\

\noindent\textbf{Verification of (3) $\displaystyle \dashint_{B_{\bar g_i}(p_i,A)}\big||\nabla_{\bar g_i}\bar h_i|^2-4 \bar h_i\big|\, d\bar g_i\leq \psi(\varepsilon_i \,|\,A) A^2$.}
\\

We still begin our argument with $h_i$. Take any unit geodesic $\gamma(t)$ with $\gamma(0)=p_i$, we have
\begin{align*}
    \frac{d}{dt}|\nabla_{\bar g_i}h_i|^2(\gamma(t))=&\ 2\nabla_{\bar g_i}^2h_i(\nabla_{\bar g_i} h_i,\gamma'(t))
    \\
    \le &\ 4(1+2\varepsilon_i)\langle\nabla_{\bar g_i} h_i,\gamma'(t)\rangle \le 4(1+2\varepsilon_i)|\nabla_{\bar g_i} h_i|(\gamma(t)),
\end{align*}
where we have applied \eqref{eq:h-Hess}. Thus,
\begin{align*}
    \frac{d}{dt}|\nabla_{\bar g_i}h_i|(\gamma(t)) \le 2(1+2\varepsilon_i).
\end{align*}
Integrating this inequality, we get, for all $t\le 2A$, 
\begin{align}\label{eq:grad h upper bound}
    |\nabla_{\bar g_i}h_i|(\gamma(t)) \le |\nabla_{\bar g_i}h_i|(\gamma(0))+2(1+2\varepsilon_i)t \le 2t+4\varepsilon_iA.
\end{align}
Combining the above with \eqref{eq:grad-lower} and \eqref{eq:h-close-to-d2}, we get
\begin{align}\label{eq:h radial}
    \sup_{B_{\bar g_i}(p_i,2A)}\big||\nabla_{\bar g_i}h_i|^2-4h_i\big|\le \Psi(\varepsilon_i\,|\, A).
\end{align}

Next, we show that the gradients of $h_i$ and $\bar h_i$ are $L^2$-close. Indeed, 
\begin{align}\label{eq:h bar h grad L2 close}
    \dashint_{B_{\bar g_i}(p_i,2A)}|\nabla_{\bar g_i}(h_i-\bar h_i)|^2\, d\bar g_i= &\ - \dashint_{B_{\bar g_i}(p_i,2A)}(h_i-\bar h_i)\Delta(h_i-\bar h_i)\, d\bar g_i
    \\\nonumber
    \le &\ \sup_{B_{\bar g_i}(p_i,2A)}|h_i-\bar h_i|\cdot |\Delta(h_i-\bar h_i)|
    \\\nonumber
    \le &\ \Psi(\varepsilon_i\,|\,A),
\end{align}
where we have applied \eqref{eq:h-Hess}, \eqref{eq:Dirichlet}, and \eqref{eq:h bar h closeness}. While by \eqref{eq:h radial},
\begin{align*}
    \dashint_{B_{\bar g_i}(p_i,2A)}|\nabla_{\bar g_i}h_i |^2\, d\bar g_i &\ \le \dashint_{B_{\bar g_i}(p_i,2A)} (8A+\Psi(\varepsilon_i\,|\,A))\, d\bar g_i\le 9nA,
    \\
     \dashint_{B_{\bar g_i}(p_i,2A)}|\nabla_{\bar g_i}\bar h_i |^2\, d\bar g_i &\ \le 2\dashint_{B_{\bar g_i}(p_i,2A)}|\nabla_{\bar g_i}h_i |^2\, d\bar g_i + 2\dashint_{B_{\bar g_i}(p_i,2A)}|\nabla_{\bar g_i}(h_i-\bar h_i)|^2\, d\bar g_i\le 20nA,
\end{align*}
we get
\begin{eqnarray*}
    && \dashint_{B_{\bar g_i}(p_i,2A)}\big||\nabla_{\bar g_i}\bar h_i|^2-|\nabla_{\bar g_i}h_i|^2\big|\,d\bar g_i
    \\
    &=& \dashint_{B_{\bar g_i}(p_i,2A)}\big|\nabla_{\bar g_i}\bar h_i-\nabla_{\bar g_i}h_i\big|\,\big|\nabla_{\bar g_i}\bar h_i+\nabla_{\bar g_i}h_i\big|\, d\bar g_i\\
    &\leq& \left(\dashint_{B_{\bar g_i}(p_i,2A)}|\nabla_{\bar g_i}(\bar h_i-h_i)|^2\,d\bar g_i\right)^{\frac{1}{2}} \left(\dashint_{B_{\bar g_i}(p_i,2A)}2\big(|\nabla_{\bar g_i}h_i|^2+|\nabla_{\bar g_i}\bar h_i|^2\big)  d\bar g_i\right)^{\frac{1}{2}}\\
    &\leq& \Psi(\varepsilon_i\,|\,A).
\end{eqnarray*}
Combining the above with \eqref{eq:h bar h closeness} and \eqref{eq:h radial} again finishes the proof of item (3) of Definition \eqref{Def_conical function}.
\\

\noindent \textbf{Verification of (4) $\displaystyle \dashint_{B_{\bar g_i}(p_i,A)}|\nabla_{\bar g_i}(\bar h_i-d_{\bar g_i}^2(p_i,\cdot))|\,d\bar g_i\leq \psi(\varepsilon_i\,|\,A)A$.}
\\

Again, we start our argument with $h_i$. It is clear from \eqref{eq:grad-lower} and \eqref{eq:grad h upper bound} that 
\begin{align*}
    \sup_{B_{\bar g_i(p_i,2A)}}\Big|\langle\nabla_{\bar g_i}h_i,\nabla_{\bar g_i} d_{\bar g_i}(p_i,\cdot)\rangle - 
    2d_{\bar g_i}(p_i,\cdot)\Big| \le \Psi(\varepsilon_i\,|\,A).
\end{align*}
Thus, by \eqref{eq:h-close-to-d2}, \eqref{eq:h radial}, and the following fact
\begin{align*}
    \left|\nabla_{\bar g_i}(h_i-d^2_{\bar g_i}(p_i,\cdot))\right|^2= \left|\nabla_{\bar g_i}h_i\right|^2+4d_{\bar g_i}^2(p_i,\cdot)-4d_{\bar g_i}(p_i,\cdot)\langle\nabla_{\bar g_i}h_i,\nabla_{\bar g_i} d_{\bar g_i}(p_i,\cdot)\rangle,
\end{align*}
we have
\begin{align*}
    \sup_{B_{\bar g_i(p_i,2A)}}\big|\nabla_{\bar g_i}( h_i-d^2_{\bar g_i}(p_i,\cdot))\big|\le \Psi(\varepsilon_i\,|\,A),
\end{align*}
and (4) follows immediately from \eqref{eq:h bar h grad L2 close} and the Cauchy-Schwarz inequality.
\\

\noindent\textbf{Verification of (2) $\displaystyle \dashint_{B_{\bar g_i}(p_i,A)}|\nabla^2_{\bar g_i}\bar h_i-2\bar g_i|^2\leq \psi(\varepsilon_i\,|\,A)^2$.}
\\

Because of \eqref{eq:h-Hess}, we need only to show that $\nabla^2_{\bar g_i}\bar h_i$ and $\nabla^2_{\bar g_i} h_i$ are close in the $L^2$-sense. To this end, we recall the standard Bochner formula
\begin{align}\label{eq:Bochner}
    2|\nabla^2_{\bar g_i}(\bar h_i-h_i)|^2=&\ \Delta_{\bar g_i}|\nabla_{\bar g_i}(\bar h_i-h_i)|^2-2\Ric_{\bar g_i}(\nabla_{\bar g_i}(\bar h_i-h_i),\nabla_{\bar g_i}(\bar h_i-h_i))
    \\\nonumber
     &\ -2\langle \nabla_{\bar g_i}(\bar h_i-h_i),\nabla_{\bar g_i}\Delta_{\bar g_i}(\bar h_i-h_i)\rangle.
\end{align}
Let $\varphi:M_i\to[0,1]$ be a cut-off function such that
\begin{align*}
    \varphi(x)=1 \quad & \text{ on } \quad B_{\bar g_i}(p_i,A),
    \\
    \varphi(x)=0\quad & \text{ on } \quad M_i\setminus B_{\bar g_i}(p_i,2A),
    \\
    |\nabla_{\bar g_i}\varphi|\le \frac{C(n)}{A},\quad |\Delta_{\bar g_i}\varphi|\le \frac{C(n)}{A^2}\quad &\text{ on } \quad M_i.
\end{align*}
One may either implement the cut-off function of Cheeger-Colding \cite{CC96}, or construct it explicitly by $\varphi=\phi(h_i/A^2)$, where $\phi$ is a decreasing function with $\phi|_{(-\infty,2]}=1$, $\phi|_{[3,+\infty)}=0$, and $-2\le \phi'\le 0$. It is easy to check such a cut-off function satisfies the above conditions.

Now, multiplying \eqref{eq:Bochner} with $\varphi$ and integrate, we get
\begin{align*}
    \int_{B_{\bar g_i}(p_i,A)}2|\nabla^2_{\bar g_i}(\bar h_i-h_i)|^2\,d\bar g_i \le&\  \int_{M_i} 2\varphi|\nabla^2_{\bar g_i}(\bar h_i-h_i)|^2\,d\bar g_i
    \\
    \le&\  \int_{B_{\bar g_i}(p_i,2A)}2(|\Delta_{\bar g_i}\varphi|+2n\varepsilon_ir_i^2)|\nabla_{\bar g_i}(\bar h_i-h_i)|^2\,d\bar g_i
    \\
    &\ + 2 \int_{B_{\bar g_i}(p_i,2A)} |\nabla_{\bar g_i}\varphi||\nabla_{\bar g_i}(\bar h_i-h_i)||\Delta_{\bar g_i}(\bar h_i-h_i)|\,d\bar g_i
    \\
    &\ + 2 \int_{B_{\bar g_i}(p_i,2A)} \varphi|\Delta_{\bar g_i}(\bar h_i-h_i)|^2\,d\bar g_i
    \\
    \le &\ \Psi(\varepsilon_i\,|\, A) |B_{\bar g_i}(p_i,2A)|, 
\end{align*}
where we have applied \eqref{eq:h bar h grad L2 close}, \eqref{eq:Dirichlet}, \eqref{eq:h-Hess}, and the Cauchy-Schwarz inequality. Since the Ricci curvature condition implies a volume doubling estimate, we get
\begin{align*}
    \dashint_{B_{\bar g_i}(p_i,A)}|\nabla^2_{\bar g_i}(\bar h_i-h_i)|^2\,d\bar g_i \le \Psi(\varepsilon_i\,|\, A).
\end{align*}

By \eqref{eq:condition of blow-up sequence}, we can take a noncollapsed pointed-Gromov-Hausdorff limit for a subsequence of $\{(M_i,d_{\bar g_i},p_i)\}_{i=1}^\infty$, and the limit we shall denote by $(X,d,p)$. By the existence of conical functions $\bar h_i$ and Theorem \ref{Thm:conical function implying almost cone}, $(X,d)$ is a metric cone at $p$. On the other hand, by \eqref{eq:harrad lower bound}, the metric on $B_{\bar g_i}(p_i,\delta)$ is uniformly $C^{1,\alpha}$ equivalent to the Euclidean metric, so on the limit $(X,d)$, $p$ is a $C^{1,\alpha}$-smooth point. Namely, there is a smooth neighborhood at $p$, on which the metric $d$ is induced by a $C^{1,\alpha}$ Riemannian metric. However, a metric cone with a smooth tip is the Euclidean space $\mathbb{R}^n$.

Finally, with condition \eqref{eq:condition of blow-up sequence} and Colding's volume continuity theorem \cite{Cold97}, we have
\begin{align*}
    \omega_n(1-\delta)=\lim_{n\to\infty}|B_{\bar g_i}(p_i,1)|=|B_e(1)|=\omega_n,
\end{align*}
which is a contradiction. This completes the proof of the lemma.
\end{proof}

We now use the lemma to show Theorem \ref{Thm:Ric gap}.

\begin{proof}[Proof of Theorem \ref{Thm:Ric gap}] We again argue by contradiction. Assume that there is a sequence of Ricci shrinkers $\{(M_i^n, g_i,f_i, p_i)\}_{i=1}^\infty$ with 
\begin{align}\label{eq: Ric contradiction assum}
    -\varepsilon_ig_i\le\Ric_{g_i}\leq \varepsilon_ig_i\quad \text{ on }\quad B_{g_i}(p_i,1),
\end{align}
where $\varepsilon_i\to0$, but none of them is isometric to $\mathbb{R}^n$. Let us define
\begin{align*}
    r_i=\sup\{s\in(0,1]\,:\,|B_{g_i}(p_i,s)|\ge \omega_n(1-\delta)s^n\}.
\end{align*}
By Lemma \ref{Lm:critical radius estimate}, we have 
\begin{align}\label{eq:critical radius bound}
    r_i\ge r_0(n)>0\quad \text{for all $i$ large enough.}
\end{align}  
Thus, by Lemma \ref{Lm:volume and entropy noncollapsed}, the sequence is noncollapsed in that
\begin{align*}
    \mu(M_i)\ge C(n)\quad \text{ for all $i$ large enough.}
\end{align*}

We may then apply Theorem \ref{Thm:shrinker limit} to take a limit for a subsequence of $\{(M_i^n, g_i,f_i, p_i)\}_{i=1}^\infty$. Denote by $(X,d,p)$ the limit, and $X=\mathcal{R}\sqcup\mathcal{S}$ its decomposition. By the contradictory assumption \eqref{eq: Ric contradiction assum}, we have
\begin{align*}
    \Ric\equiv 0\quad \text{ on }\quad B(p,1)\cap\mathcal{R},
\end{align*}
so $(X,d)$ is a Ricci-flat cone at $p$.

On the other hand, by \eqref{eq:critical radius bound} and Theorem \ref{Thm:critical radius}, we have
\begin{align*}
    r_H(p_i)\ge r_0(n)\delta >0\quad \text{ for all $i$ large enough.}
\end{align*}
This again implies that $p\in X$ is a smooth point; see the argument towards the end of the proof of Lemma \ref{Lm:critical radius estimate}. Thus $X$ is isometric to $\mathbb{R}^n$. Finally, by \cite[Corollary 7]{LW20}, any shrinker that is close enough to the Euclidean space in the pointed Gromov-Hausdorff sense must be isometric to the Euclidean space. So $(M_i^n, g_i,f_i, p_i)$ must be the gaussian shrinker for all $i$ large enough, and this contradicts our assumptions that none of these is gaussian. The proof of the theorem is now completed.
\end{proof}

\section{The proof of Theorem \ref{Thm:nu gap}}
\begin{proof}[Proof of Theorem \ref{Thm:nu gap}]
    Suppose on the contrary. There is a sequence of non-gaussian Ricci shrinkers $\{(M_i^n,g_i,p_i,f_i)\}_{i=1}^\infty$ with 
    \begin{align}\label{eq:entropy lower to 0}
        0\ge \nu(B_{g_i}(p_i,1),g_i,1) \to 0.
    \end{align}
    By the pseudolocality result \cite[Theorem 25]{LW20}, there are fixed positive constants $\varepsilon(n)$ and $C(n)$ such that
\begin{align}\label{eq:smoothness of the tip}
    |{\Rm}_{g_i}| \leq C \quad \text{  on  } \quad B_{g_i}(p_i, \varepsilon).
\end{align}

The local entropy bound \eqref{eq:entropy lower to 0} and a standard argument in Perelman's proof of the no local collapsing theorem \cite{Per02} show that there is a uniform lower bound of $|B_{g_i}(p_i,1)|$. In other words, the sequence $\{(M_i^n,g_i,p_i,f_i)\}_{i=1}^\infty$ is noncollapsed in that
\begin{align*}
    \mu(M_i)\ge -Y\quad \text{ for all $i$ large enough }
\end{align*}
for some $Y<+\infty$. We can now apply Theorem \ref{Thm:shrinker limit} to obtain a limit for a subsequence of $\{(M_i^n,g_i,p_i,f_i)\}_{i=1}^\infty$. Denote by $(X,d,p)$ the limit, $X=\mathcal{R}\sqcup\mathcal{S}$ its decomposition, and $g$ the metric on $\mathcal{R}$. 

By \eqref{eq:smoothness of the tip}, we have that $p$ must be a smooth point. That is, we can find some radius $r<\varepsilon<1$, such that $B(p,r)\subset \mathcal{R}$. Since the convergence of Theorem \ref{Thm:shrinker limit} is smooth on $\mathcal{R}$, we have
\begin{align*}
    \nu(B(p,r),g,1)\ge \lim_{i\to\infty} \nu(B_{g_i}(p_i,r),g_i,1)\ge \lim_{i\to\infty} \nu(B_{g_i}(p_i,1),g_i,1)=0.
\end{align*}
By a result of Cheng \cite[Theorem 1.4]{Cheng23}, $g$ must be a flat metric on $B(p,r)$. As before, $(X,d)$ must be a cone at $p$ while $p$ is a smooth point. So $X$ is the Euclidean space. When $i$ is large enough, \cite[Corollary 7]{LW20} leads to a contradiction.
\end{proof}

\section{The proof of Theorem \ref{Thm:Applications}}

We prove Theorem \ref{Thm:Applications} in this section. Let $(M^n,g_t)_{t\in[-T,0]}$ be a Type I Ricci flow satisfying 
\begin{align*}
    |{\Rm_{g_t}}|\le \frac{C}{|t|}\quad \text{ on }\quad M\times[-T,0).
\end{align*}
Assume by contradiction that $x\in M$ is a point that satisfies the condition in the statement, with $\delta=\delta(n)>0$ a dimensional constant to be fixed, yet $x$ is a singular point in the sense of \cite{EMT11} (cf. \cite[Definition 1.1, 1.2, 1.3, 1.4, 1.5]{EMT11}, note that all these definitions agree according to \cite[Theorem 1.2]{EMT11}). Then, according to \cite[Theorem 1.1]{EMT11}, for any sequence $\lambda_i\searrow 0$, the blow-up sequence $\{(M,\lambda_i^{-1}g_{\lambda_it},x)_{t\in[-T\lambda_i^{-1},0)}\}_{i=1}^\infty$, after passing to a subsequence, converges smoothly to (the canonical flow of) a normalized shrinker:
\begin{align*}
    (M,\lambda_i^{-1}g_{\lambda_it},x)_{t\in[-T\lambda_i^{-1},0)}\xrightarrow{\ \ \ C^\infty \ \ }(M^\infty,g^\infty_t,x_\infty)_{t\in (-\infty,0)}.
\end{align*}
In other words, the limit flow is genetated by a Ricci shrinker $(M^\infty,g,f,p)$:
\begin{align*}
    \partial_t\phi_t:=|t|^{-1}\nabla_g f\circ\phi_t,\quad \phi_{-1}=\operatorname{id},\quad g^\infty_t:=|t|^{-1}\phi_t^*g,\quad t\in(-\infty,0).
\end{align*}

We shall then estimate the distance between $x_\infty$ and the minimum point $p$ of $f$. To this end, let us recall that, by Naber \cite{Na10} $f\circ\phi_t$ is indeed the $C^{0,\alpha}_{\operatorname{loc}}$ or $W^{1,2}_{\operatorname{loc}}$ limit of the singular reduced distance $\ell_{x,0}(\cdot,\lambda_i|t|)$ (that is, the reduced distance based at a singular point, see \cite[Proposition 3.8]{Na10}). It is easy to observe from \cite[Proposition 3.2]{Na10} that $f\circ\phi_t$, as the limit of $\ell_{x,0}(\cdot,\lambda_i|t|)$, satisfies the normalization equation
\begin{align*}
    \left|\nabla_{g^\infty_t} (f\circ\phi_t)\right|_{g^\infty_t}^2+R_{g^\infty_t}=\frac{f\circ\phi_t}{|t|}.
\end{align*}
Thus, the potential function $f:M^\infty\to\mathbb{R}$ is normalized exactly as \eqref{eq:normalization}.

Recall that $p\in M^\infty$ is defined as a minimum point of $f$. To estimate the distance between $x_\infty$ and $p$, we compute $f(x_\infty)$ via $\ell_{x,0}(x,|t|)$. By our assumption 
\begin{align}\label{eq:assumption of rc}
    r_c(x,t)\ge 6n\sqrt{|t|} \quad \text{ for all } \quad  t\in  [-T,0),
\end{align} 
where 
        \begin{align*}
            r_c(x,t):=\sup\{r>0\,:\, \Ric_{g_t}\le \delta r^{-2}g_t\ \text{ on }\ B_{g_t}(x,r)\},
        \end{align*}
         we have
        \begin{align*}
            R_{g_t}(x)\le \frac{\delta}{|t|}\quad \text{ for all } \quad t\in [-T,0).
        \end{align*}
        Let us fix any $t\in [-T,0)$, then, for $t\ll t'<0$, we have
        \begin{align*}
            \ell_{x,t'}(x,|t-t'|)\le &\ \frac{1}{2\sqrt{|t-t'|}}\int_{0}^{|t-t'|}\sqrt{\tau}R_{g_{t'-\tau}}(x)\,d\tau
            \le  \frac{1}{2\sqrt{|t-t'|}}\int_{0}^{|t-t'|}\sqrt{\tau}\frac{\delta}{\tau-t'}\,d\tau
            \\
            \le &\ \frac{1}{2\sqrt{|t-t'|}}\int_0^{|t-t'|}\delta \tau^{-1/2}\,d\tau=\delta.
        \end{align*}
        According to the definition of $\ell_{x,0}(x,|t|)$, we have
        \begin{align*}
            \ell_{x,0}(x,|t|):=\lim_{t'\nearrow 0} \ell_{x,t'}(x,|t|)\le \delta,
        \end{align*}
        and hence 
        \begin{align}\label{eq:center estimate 1}
            f(x_\infty)=\lim_{i\to\infty}\ell_{x,0}(x,\lambda_i)\le \delta.
        \end{align}
        Recall the potential function estimate of \cite[Lemma 2.1]{HM11}   \begin{align}\label{eq:potential estimate}
            \frac{1}{4}\big(d(x,p)-5n\big)_+^2\le f(x)\le \frac{1}{4}\left(d(x,p)+\sqrt{2n}\right)^2.
        \end{align}
        Combining \eqref{eq:center estimate 1} and \eqref{eq:potential estimate}, we have
        \begin{align}\label{eq:position of the center}
            d(x_\infty,p)\le 5n+16\delta^2\le 5n+1. 
        \end{align}

        By our assumption that $r_{c}(x,t)\ge 6n\sqrt{|t|}$, we immediately get 
        \begin{align*}
            \Ric_g\le \delta g\quad \text{ on }\quad B(x_\infty,6n).
        \end{align*}
        Combining  \eqref{eq:position of the center}, we have
        \begin{align*}
            \Ric_g\le \delta g\quad \text{ on }\quad B(p,1).
        \end{align*}
        It follows from Theorem \ref{Thm:Ric gap} that the shrinker $(M^\infty,g,f,p)$ is gaussian, if we take $\delta\le \varepsilon$, where $\varepsilon$ is the constant therein. Finally, by \cite[Theorem 1.4]{EMT11}, $x$ is not a singular point.

\bigskip


\begin{thebibliography}{CCG{\alphalchar{+}}10}

\bibitem[And90]{And90}
Anderson, Michael T. \emph{Convergence and rigidity of manifolds under Ricci curvature bounds}.
Invent. Math. 102 (1990), no. 2, 429–445.


\bibitem[CZ10]{CZ10}Cao, Huai-Dong; Zhou, De-Tang. \emph{On complete gradient shrinking Ricci solitons.} J. Differential Geom. 85 (2010), 175--186. 
\bibitem[CN09]{CN09} Carrillo, Jos\'e A.; Ni, Lei \emph{Sharp logarithmic Sobolev inequalities on gradient solitons and applications}. Comm. Anal. Geom. 17 (2009), no. 4, 721–753.



\bibitem[CMZ25]{CMZ22} Chan, Pak-Yeung; Ma, Zilu; Zhang, Yongjia. \emph{A local gap theorem for Ricci shrinkers.} Comm. Anal. Geom. 2025, to appear.


\bibitem[CJN21]{CJN21}
Cheeger, Jeff; Jiang, Wenshuai; Naber, Aaron.  \emph{Rectifiability of singular sets of noncollapsed limit spaces with Ricci curvature bounded below}. Ann. of Math. 193(2021), no. 2, 407–538.
 



\bibitem[CC96]{CC96} Cheeger, Jeff; Colding, Tobias H. \emph{Lower Bounds on Ricci Curvature and the Almost Rigidity of Warped Products} Annals of Mathematics, 144.1(1996),189-237 

\bibitem[CC97]{CC97} Cheeger, Jeff; Colding, Tobias H. \emph{On the structure of spaces with Ricci curvature bounded below. I.} Journal of Differential Geometry 46.3 (1997): 406-480.

\bibitem[CC00a]{CC00a} Cheeger, Jeff; Colding, Tobias H. \emph{On the structure of spaces with Ricci curvature bounded below. II.} Journal of Differential Geometry 54.1 (2000): 13-35.

\bibitem[CC00b]{CC00b} Cheeger, Jeff; Colding, Tobias H. \emph{On the structure of spaces with Ricci curvature bounded below. III.} Journal of Differential Geometry 54.1 (2000): 37-74.


\bibitem[Ch01]{Ch01} Cheeger, Jeff. \emph{Degeneration of Riemannian metrics under Ricci curvature bounds.}  Scuola Normale Superiore, 2001.


\bibitem[Che09]{Che09}Chen, Bing-Long, \emph{Strong uniqueness of the Ricci flow}. J. Differential Geom. 82 (2009), 363--382.


\bibitem[Cheng23]{Cheng23} Cheng, Liang. \emph{On local rigidity theorems with respect to the scalar curvature}. 	 arXiv preprint arXiv:2310.05011.


\bibitem[Co97]{Cold97}Colding, Tobias H. \emph{Ricci curvature and volume convergence}. Ann. of Math. (2) 145 (1997), no. 3, 477–501.

\bibitem[CM25]{CM25} Colding, Tobias Holck;  Minicozzi, William P. \emph{Singularities of Ricci flow and diffeomorphisms.} Publications mathématiques de l'IHÉS 142.1 (2025): 75-152.









\bibitem[EMT11]{EMT11} Enders, Joerg; M\"uller, Reto; Topping, Peter M. \emph{On type-I singularities in Ricci flow}. Comm. Anal. Geom. 19 (2011), no. 5, 905–922.





\bibitem[HM11]{HM11} Haslhofer, Robert; M\"uller, Reto. \emph{A compactness theorem for complete Ricci shrinkers}, Geom.
Funct. Anal. 21 (2011), no. 5, 1091–1116, DOI 10.1007/s00039-011-0137-4. MR2846384






\bibitem[HLW21]{HLW21} Huang, Shaosai; Li, Yu; Wang, Bing  \emph{On the regular-convexity of Ricci shrinker limit spaces}. J. Reine Angew. Math. 771 (2021), 99–136.

\bibitem[H19]{H19} Huang, Shaochuang  \emph{ A note on existence of exhaustion functions and its applications}. J. Geom. Anal. 29 (2019), no. 2, 1649–1659.

\bibitem[J]{J} Jiang, Wenshuai. \emph{Lectures on Cheeger-Colding Theory}, unpublished.



\bibitem[LW20]{LW20} Li, Yu; Wang, Bing. \emph{Heat kernel on Ricci shrinkers}. Calc. Var. Partial Differential Equations 59 (2020), no. 6, Paper No. 194, 84 pp.
\bibitem[LLW21]{LLW21}Li, Haozhao; Li, Yu; Wang, Bing. \emph{On the structure of Ricci shrinkers} J. Funct. Anal. 280 (2021), no. 9, Paper No. 108955, 75 pp.



\bibitem[LW24]{LW24} Li, Yu; Wang, Bing. \emph{Rigidity of the round cylinders in Ricci shrinkers.} Journal of Differential Geometry 127.2 (2024): 817-897.


\bibitem[LZ23]{LZ23} Li, Yu; Zhang, Wenjia. \emph{On the rigidity of Ricci shrinkers.} arXiv preprint arXiv:2305.06143 (2023).

\bibitem[MW11]{MW11} Munteanu, Ovidiu; Wang, Mu-Tao. \emph{The curvature of gradient Ricci solitons}. Math. Res. Lett. 18 (2011), no. 6, 1051–1069.

\bibitem[Na10]{Na10} Naber, Aaron. \emph{Noncompact shrinking four solitons with nonnegative curvature.} Journal für die reine und angewandte Mathematik, vol. 2010, no. 645, 2010, pp. 125-153. https://doi.org/10.1515/crelle.2010.062



\bibitem[Per02]{Per02} Perelman, Grisha, \emph{The entropy formula for the Ricci flow and its geometric applications}, arXiv:math.DG/0211159 (2002).




\bibitem[WW25]{WW25} Wang, Jie; Wang, Youde. \emph{Rigidity and $\varepsilon$-regularity theorems of Ricci shrinkers} Calc. Var. Partial Differential Equations 64 (2025), no. 2, Paper No. 42, 27 pp.



\bibitem[Yo12]{Yo12} Yokota, Takumi. \emph{Addendum to ‘Perelman’s reduced volume and a gap theorem for the Ricci flow’.} Comm. Anal. Geom. 20 (2012): 949-955.


\bibitem[Zh18]{Zh18} Zhang, Shijin. \emph{A gap theorem on complete shrinking gradient Ricci solitons}. Proc. Amer. Math. Soc. 146 (2018), no. 1, 359–368.

\bibitem[Zh20]{Zh20} Zhang, Zhuhong. \emph{A gap theorem of four-dimensional gradient shrinking solitons}. Comm. Anal. Geom. 28 (2020), no. 3, 729–742
\end{thebibliography}
\bibliographystyle{amsalpha}

\newcommand{\alphalchar}[1]{$^{#1}$}
\providecommand{\bysame}{\leavevmode\hbox to3em{\hrulefill}\thinspace}
\providecommand{\MR}{\relax\ifhmode\unskip\space\fi MR }
\providecommand{\MRhref}[2]{%
  \href{http://www.ams.org/mathscinet-getitem?mr=#1}{#2}
}

\noindent Department of Mathematics, National Tsing Hua University, Hsinchu, Taiwan
\\ E-mail address: \verb"pychan@math.nthu.edu.tw"
\\

\noindent School of Mathematical Sciences, Shanghai Jiao Tong University, Shanghai, 200240, China
\\ E-mail address: \verb"sunzhang91@sjtu.edu.cn"

\end{document}